\newtheorem{theo}{Theorem}
\newtheorem{lem}[theo]{Lemma}
\newtheorem{cor}[theo]{Corollary}
\newtheorem{rem}[theo]{Remark}
\theoremstyle{definition}
\newtheorem*{nota}{Notation}
\newtheorem*{defi}{Definition}
\newtheorem*{con}{Convention}
\newcommand{\onz}{\mathrm{O}_n(\mathbb{Z})}
\newcommand{\setPermMatn}{\mathcal{P}_n}
\newcommand{\zn}{\mathbb{Z}^n}
\newcommand{\rn}{\mathbb{R}^n}
\newcommand{\rnpl}{\mathbb{R}_{\geq0}^n}
\newcommand{\xstar}{x^{\ast}\hspace{-0.05cm}}
\newcommand{\xstarfix}{x^{\ast}_{\mathrm{fix}}}
\newcommand{\XI}{X_I}
\newcommand{\FixGR}{\mathrm{Fix}_{G}(\rn)}
\newcommand{\Sn}[1]{\mathrm{S}_{#1}}
\newcommand{\An}[1]{\mathrm{A}_{#1}}
\newcommand{\largeIndSet}{I}
\newcommand{\smallIndSet}{J}
\newcommand{\neigh}{\mathcal{N}}
\begin{document}
\title{Symmetries in Integer Programs}
\author{Katrin Herr}
\address{Institut f\"ur Mathematik, MA 6-2\\
TU Berlin\\
%Stra\ss{}e des 17. Juni 136\\
10623 Berlin\\
Germany}
\email{herr@math.tu-berlin.de}  
\author{Richard B\"odi}
\address{IBM Zurich Research Laboratory\\
%S\"aumerstrasse 4\\
CH-8803 R\"uschlikon\\
Switzerland}
\email{rbo@zurich.ibm.com} 

\keywords{symmetry, symmetry group, orbit, group action, alternating group, linear programming, integer program}
\date{\today}

\begin{abstract}
The notion of symmetry is defined in the context of Linear and Integer Programming. Symmetric integer programs are studied from a group theoretical viewpoint. We investigate the structure of integer solutions of integer programs and show that any integer program on $n$ variables having an alternating group $A_n$ as a group of symmetries can be solved in linear time in the number of variables.
\end{abstract}
\maketitle

\section{Introduction}
\label{intro}

This paper continues to investigate symmetries of linear and integer programs which we have started in~\cite{herrboedi}. For the sake of completeness, we will briefly summarize the definitions and results from our previous paper.

In practice, highly symmetric integer programs often turn out to be particularly hard to solve. The
problem is that branch-and-bound or branch-and-cut algorithms, which are commonly used to solve
integer programs, work efficiently only if the bulk of the branches of the search tree can be
pruned. Since symmetry in integer programs usually entails many equivalent solutions, the branches
belonging to these solutions cannot be pruned, which leads to a very poor performance of the
algorithm.\\

Only in the last few years first efforts were made to tackle this irritating problem. In 2002,
Margot presented an algorithm that cuts feasible integer points without changing the optimal value
of the problem, compare~\cite{margot1}. Improvements and generalizations of this basic idea can be
found in~\cite{margot2,margot3}. In~\cite{linderoth1,linderoth2}, Linderoth et al. concentrate on
improving branching methods for packing and covering integer problems by using information about
the symmetries of the integer programs. Another interesting approach to these kind of problems has
been developed by Kaibel and Pfetsch. In~\cite{kaibel1}, the authors introduce special polyhedra,
called orbitopes, which they use in~\cite{kaibel2} to remove redundant branches of the search tree.
Friedman's fundamental domains in~\cite{friedman} are also aimed at avoiding the evaluation of
redundant solutions. For selected integer programs like generalized bin-packing problems there
exists a completely different idea how to deal with symmetries, see e.g.~\cite{fekete}. Instead of
eliminating the effects of symmetry during the branch-and-bound process, the authors exclude
symmetry already in the formulation of the problem by choosing an
appropriate representation for feasible packings.\\

In this paper we will examine symmetries of integer programs in their natural environment, the field of group theory. \\

\section{Preliminaries}
The main object of our studies are linear or integer programs, LP or IP for short:
\begin{equation}\label{LP1_ohne_rnpl}
\begin{split}
&\mathrm{max}\enspace  c^tx\\
&\mathrm{s.t.}\hspace{0.35cm}  Ax\leq b,\enspace x\in\rn\enspace,
\end{split}
\end{equation}
where $A\in \mathbb{R}^{m\times n},\enspace b\in \mathbb{R}^m$ and $c\in \rn\setminus\{0\}$. We are
especially interested in points that are candidates for solutions of an LP.
\begin{defi} A point $x\in\rn$ is \emph{feasible} for an LP if $x$ satisfies all constraints of the LP.
The LP itself and any set of points is \emph{feasible} if it has at least one feasible point.
\end{defi}
Hence, the set of feasible points $X$ of~(\ref{LP1_ohne_rnpl}) is given by
\[X:=\{x\in\rn\,|\,Ax\leq b\}\enspace.\]
\begin{con}
We call $X$ the \emph{feasible region}, $c$ the \emph{utility vector} and $n$ the \emph{dimension}
of~$\Lambda$. The map~$x\mapsto c^tx$ is called the \emph{utility function}, and the value of the
utility function with respect to a specific~$x\in\rn$ is called the \emph{utility value} of~$x$.
\end{con}
We can interpret the feasible region of an LP in a geometric sense. The following definition is
adopted from~\cite{schrijver}, p.~87.
\begin{defi}
A \emph{polyhedron} $P\subseteq \rn$ is the intersection of finitely many affine half-spaces, i.e.,
\[P:=\{x\in\rn\,|\,Ax\leq b\}\enspace,\]
for a matrix $A\in \mathbb{R}^{m\times n}$ and a vector $b\in \mathbb{R}^m$.
\end{defi}
Note that every row of the system~$Ax\leq b$ defines an affine half-space. Obviously, the set~$X$
is a polyhedron. Since every affine half-space is convex, the intersection of affine half-spaces --
hence, any polyhedron -- is convex as well. Therefore, we can now state the convexity of~$X$.
\begin{rem}\label{X_convex}
The feasible region of an LP is convex.
\end{rem}
Whenever we consider linear programs, we are particularly interested in points with maximal utility
values that satisfy all the constraints.
\begin{defi}
A \emph{solution} of an LP is an element $\xstar\in\rn$ that is feasible and maximizes the utility function.
\end{defi}
If we additionally insist on integrality of the solution, we get a so-called integer program, IP
for short. According to the LP formulation in~(\ref{LP1_ohne_rnpl}), the appropriate formulation
for the related IP is given by
\begin{equation}\label{IP1_ohne_rnpl}
\begin{split}
&\mathrm{max}\enspace  c^tx\\
&\mathrm{s.t.}\hspace{0.35cm}  Ax\leq b,\enspace x\in \zn\enspace,
\end{split}
\end{equation}
where $A\in \mathbb{R}^{m\times n}, b\in \mathbb{R}^m$ and $c\in
\rn\setminus\{0\}$.\\
Analogously, the set of feasible points $\XI$ of (\ref{IP1_ohne_rnpl}) is given by
\[\XI:=\{x\in\rn\,|\,Ax\leq b,\,x\in \zn\}=X\cap \zn\enspace.\]

\section{Symmetries}\label{subsection_symmetries}
In~\cite{herrboedi} symmetries of linear and integer programs have been defined as elements of~$\onz$, the group of all orthogonal matrices with integral entries,  that leave invariant the inequality system and the utility vector of the problem. Taking into account the usual
linear and integer programming constraint~$x\in\rnpl$, which forces non-negativity of the
solutions, the set of possible symmetries shrinks from~$\onz$ to the group of permutation
matrices~$\setPermMatn\leq\onz$.\\

We can always think of symmetry groups of linear or integer programs as subgroups of~$\Sn{n}$ by
\begin{rem}\label{action_of_Sn_on_Rn}
A group~$G\leq\Sn{n}$ acts on the linear space~$\rn$ via the $G$-equivariant mapping
\[\beta:\{1,\dots,n\}\to B:\enspace i\mapsto e_i\enspace,\]
where~$B$ is the set of the standard basis vectors~$e_1,\dots,e_n$ of~$\rn$.
\end{rem}

As in~\cite{herrboedi} we formulate the definition of symmetries
of linear programs and the corresponding integer programs simultaneously. Consider an LP of the
form
\begin{align}
\begin{split}\label{LP1}
&\mathrm{max}\enspace  c^tx\\
&\mathrm{s.t.}\hspace{0.35cm}  Ax\leq b,\enspace x\in\rnpl\enspace,
\end{split}
\intertext{and the corresponding IP given by}
\begin{split}\label{IP1}
&\mathrm{max}\enspace  c^tx\\
&\mathrm{s.t.}\hspace{0.35cm}  Ax\leq b,\enspace x\in\rnpl,\enspace x\in \zn\enspace,
\end{split}
\end{align}
where $A\in \mathbb{R}^{m\times n}, b\in \mathbb{R}^m$ and $c\in \rn\setminus\{0\}$. 
\vspace{0.2cm}

Note that the
LP~(\ref{LP1}) and the IP~(\ref{IP1}) have the additional constraint~$x\in\rnpl$.
\begin{nota}
An LP of the form~(\ref{LP1}) is denoted by $\Lambda$.
\end{nota}
Apparently, applying a permutation to the matrix~$A$ according to Remark~\ref{action_of_Sn_on_Rn}
translates into permuting the columns of~$A$. Since the ordering of the inequalities does not
affect the object they describe, we need to allow for arbitrary row permutations of the matrix~$A$.
The following definition takes these thoughts into account.
\begin{defi}
A \emph{symmetry of a matrix}~$A\in\mathbb{R}^{m\times n}$ is an element~$g\in\Sn{n}$ such that
there exists a row permutation~$\sigma\in\Sn{m}$ with
\[P_{\sigma}AP_{g}=A\enspace,\]
where~$P_{\sigma}$ and $P_g$ are the permutation matrices corresponding to~$\sigma$ and~$g$. The
\emph{full symmetry group of a matrix}~$A\in\mathbb{R}^{m\times n}$ is given by
\[\{g\in \Sn{n}\,\big{|}\,\exists\,{\sigma\in\Sn{m}}:\; P_{\sigma}AP_{g}=A\}\enspace.\]
A \emph{symmetry of a linear inequality system} $Ax\leq b$, where $A\in \mathbb{R}^{m\times n}$,
and $b\in \mathbb{R}^m$, is a symmetry~$g\in\Sn{n}$ of the matrix~$A$ via a row
permutation~$\sigma\in\Sn{m}$ which satisfies~$b^{\sigma}=b$.\\
A \emph{symmetry} of an LP~$\Lambda$
or its corresponding IP is a symmetry of the linear inequality system~$Ax\leq b$ that leaves the
utility vector~$c$ invariant. The \emph{full symmetry group} of~$\Lambda$ and the corresponding IP
is given by
\[\{g\in \Sn{n}\,\big{|}\,c^{g}=c,\,\exists\,{\sigma\in \Sn{m}}:\; (b^{\sigma}=b\,\wedge\,P_{\sigma}AP_{g}=A)\}\enspace.\]
\end{defi}
This is a definition of symmetry as it can be found in literature as well, see e.g.~\cite{margot2}.\\

\section{Symmetries in Integer Programming}\label{chapterIPapproach}
Due to~\cite{herrboedi}, Corollary 19, we notice that in the LP case,
transitivity of the group action already implies a one-dimensional set of fixed points, giving rise
to a one-dimensional linear program, which is the best possible result we can obtain. In this
section, it will turn out that the assumption of transitivity is not strong enough in the IP case
to lead to satisfying results. Moreover, we will see that not only the decomposition into orbits
but also the detailed structure of the symmetry group influences the complexity of integer
programs. The algorithm we are going to develop in this chapter builds on our approach for the
linear case.

We start with the consideration of the integer program corresponding to the LP given by

\[c^tx=x_1+x_2\]
subject to
\begin{alignat*}{3}
x_1\enspace&&&\leq\enspace&2.5&\\
&&x_2\enspace&\leq\enspace&2.5&\\
x_1\enspace&+\enspace&x_2\enspace&\leq\enspace&3.7&\enspace,
\end{alignat*}

\begin{figure}[htp]
% Use the relevant command to insert your figure file.
% For example, with the graphicx package use
\centering
 \includegraphics{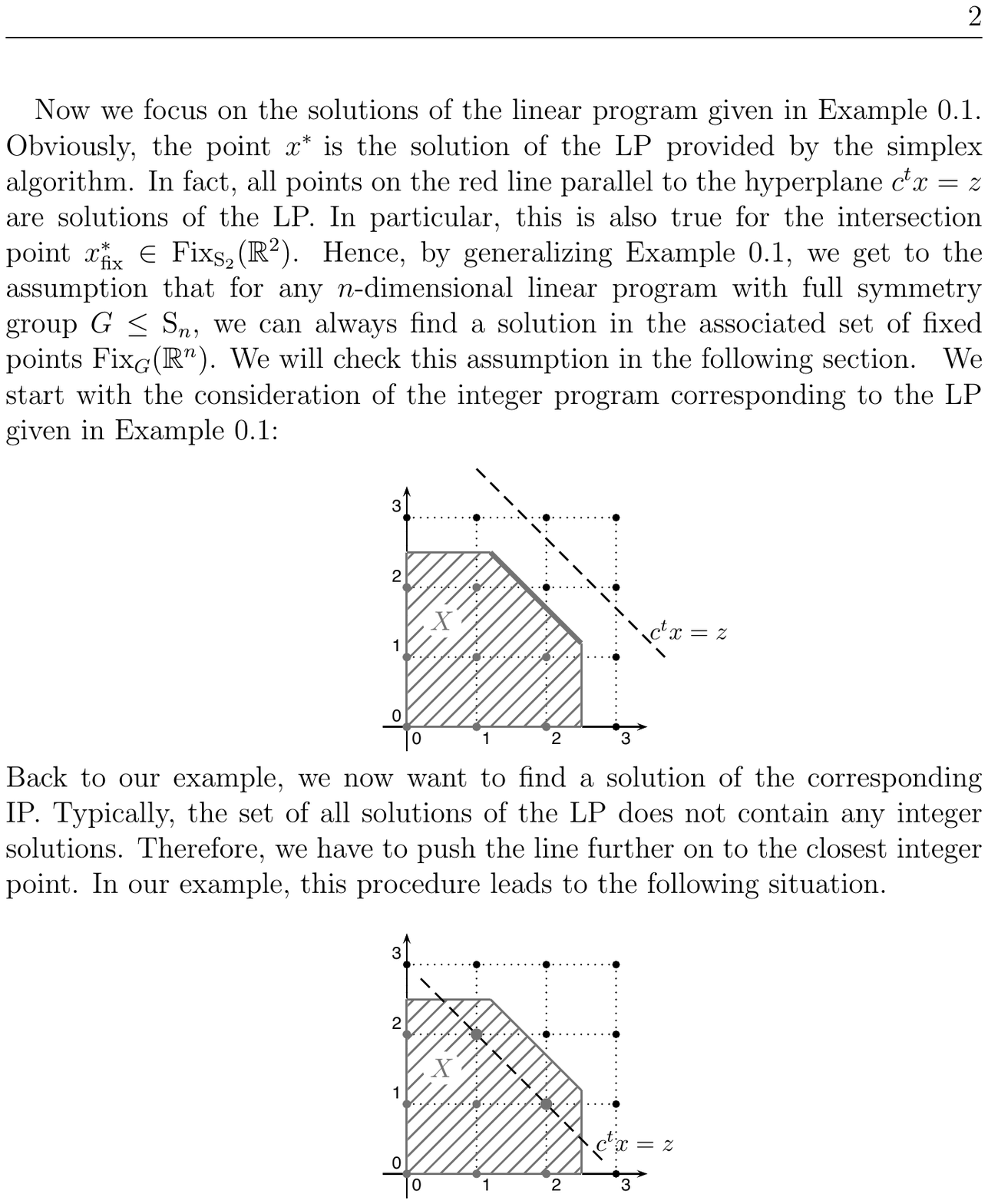}
% figure caption is below the figure
\caption{The set $X$ of feasible points}
\label{fig:1}       % Give a unique label
\end{figure}

Since we only have to handle two dimensions, we can solve this LP in a graphical
way. By pushing the blue line towards the feasible region $X$, we continuously decrease the utility
value $z$. The first non-empty intersection of the line $c^tx=z$ and $X$ then represents the set of
all solutions
of the LP, marked as a bold line.\\
Mathematically, the act of "pushing the dashed utility line" translates into looking at the affine
hyperplanes
\[H_{c,t}:=\ker(x\mapsto c^tx)+t\cdot c\]
for decreasing $t\in\mathbb{R}$. For every $x\in H_{c,t}$, there exists a vector \[x'\in \ker(y\mapsto c^ty)\] such that
\[x=x'+t\cdot c\enspace.\]
The computation of
\[c^tx=c^t(x'+t\cdot c)=\underbrace{c^tx'}_{=0}+t\cdot c^tc=t\|c\|^2\]
proves that all points of an affine hyperplane $H_{c,t}$ have the same utility value $t\|c\|^2$.
\begin{rem}\label{utility_val_const_on_hyperplane}
Given $t\in\mathbb{R}$ and a utility vector~$c$, the utility value is constant on the affine
hyperplane $H_{c,t}$.
\end{rem}
The family $(H_{c,t})_{t\in\mathbb{R}}$ consists of all affine hyperplanes that are orthogonal
to~$c$, thus they are parallel to each other. Therefore, every point is contained in exactly one
affine hyperplane $H_{c,t}$ for a specific utility vector~$c$.
\begin{lem}\label{x_in_specif_hyperplane_Hc_tx}
Given a point $x\in\rn$ and a vector $c\in\rn\setminus\{0\}$, the point $x$ is contained in the
affine hyperplane $H_{c,t_x}$ for $t_x=\frac{c^tx}{\|c\|^2}$.
\end{lem}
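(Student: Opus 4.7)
The plan is to unfold the definition of $H_{c,t}$ and directly exhibit a decomposition of $x$ as $x = x' + t_x \cdot c$ with $x' \in \ker(y \mapsto c^t y)$, which is exactly what membership in $H_{c,t_x}$ requires.

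Concretely, I would set $t_x := \frac{c^t x}{\|c\|^2}$ (well-defined since $c \neq 0$) and then define the candidate kernel element by $x' := x - t_x \cdot c$. With this definition the decomposition $x = x' + t_x c$ is tautological, so the only thing to verify is that $x'$ indeed lies in the kernel of the linear functional $y \mapsto c^t y$.

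This verification is a one-line computation using the definition of $t_x$:
\[
c^t x' = c^t(x - t_x c) = c^t x - t_x \|c\|^2 = c^t x - \frac{c^t x}{\|c\|^2}\|c\|^2 = 0.
\]
Hence $x' \in \ker(y \mapsto c^t y)$ and therefore $x = x' + t_x c \in H_{c,t_x}$, as claimed.

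There is no real obstacle here; the statement is essentially the formula for orthogonal projection of $x$ onto the line $\mathbb{R} c$, and uniqueness of $t_x$ (already remarked in the paragraph preceding the lemma, where parallel hyperplanes $H_{c,t}$ were noted to partition $\rn$) guarantees that this is the only value of $t$ for which $x \in H_{c,t}$.
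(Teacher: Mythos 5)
Your proposal is correct and follows exactly the same route as the paper's proof: define $x' := x - t_x\cdot c$, verify $c^t x' = 0$ by direct computation, and conclude $x = x' + t_x c \in H_{c,t_x}$. The only (immaterial) difference is a slight rearrangement of the one-line kernel computation, so there is nothing further to add.
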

\begin{proof}
We define a vector $x'\in\rn$ by
\[x'=x-t_x \cdot c\enspace.\]
The computation of $c^tx'$ yields
\begin{align*}
c^tx'&=c^t(x-\frac{c^tx}{\|c\|^2}\cdot c)=c^tx(1-\frac{c^tc}{\|c\|^2})=0\enspace.
\end{align*}
That is, the vector $x'$ is an element of $\ker(y\mapsto c^ty)$. We conclude that the point $x=x'+t_x\cdot c$ is contained in
\[\ker(y\mapsto c^ty)+t_x\cdot c=H_{c,t_x}\enspace.\]
\end{proof}
Given a symmetry group~$G$, we know by Remark 12 of~\cite{herrboedi} that the line~$l$ through the origin
spanned by~$c$ is invariant under~$G$. Hence, this is also true for its orthogonal
complement~$\ker(x\mapsto c^tx)$. Since the line~$l$ is even pointwise fixed by~$G$, we finally
obtain the invariance of the affine hyperplanes~$H_{c,t}$ under~$G$. Referring to
Theorem 5 of~\cite{herrboedi}, we may add the constraint~$x\in H_{c,t}$ without losing
symmetry.
\begin{rem}\label{same_sym_intersection_X_H_ct}
Given an LP with utility vector~$c$ and a symmetry group~$G$, the intersection of the feasible
region and an affine hyperplane~$H_{c,t}$ is invariant under~$G$. In particular, the orbit~$x^G$ is
contained in the same affine hyperplane~$H_{c,t}$ as~$x$.
\end{rem}
Back to our example, we now want to find a solution of the corresponding IP. Typically, the set of
all solutions of the LP does not contain any integer solutions. Therefore, we have to push the line
further on to the closest integer point. In our example, this procedure leads to the following
situation.

\begin{figure}[htp]
% Use the relevant command to insert your figure file.
% For example, with the graphicx package use
\centering
 \includegraphics{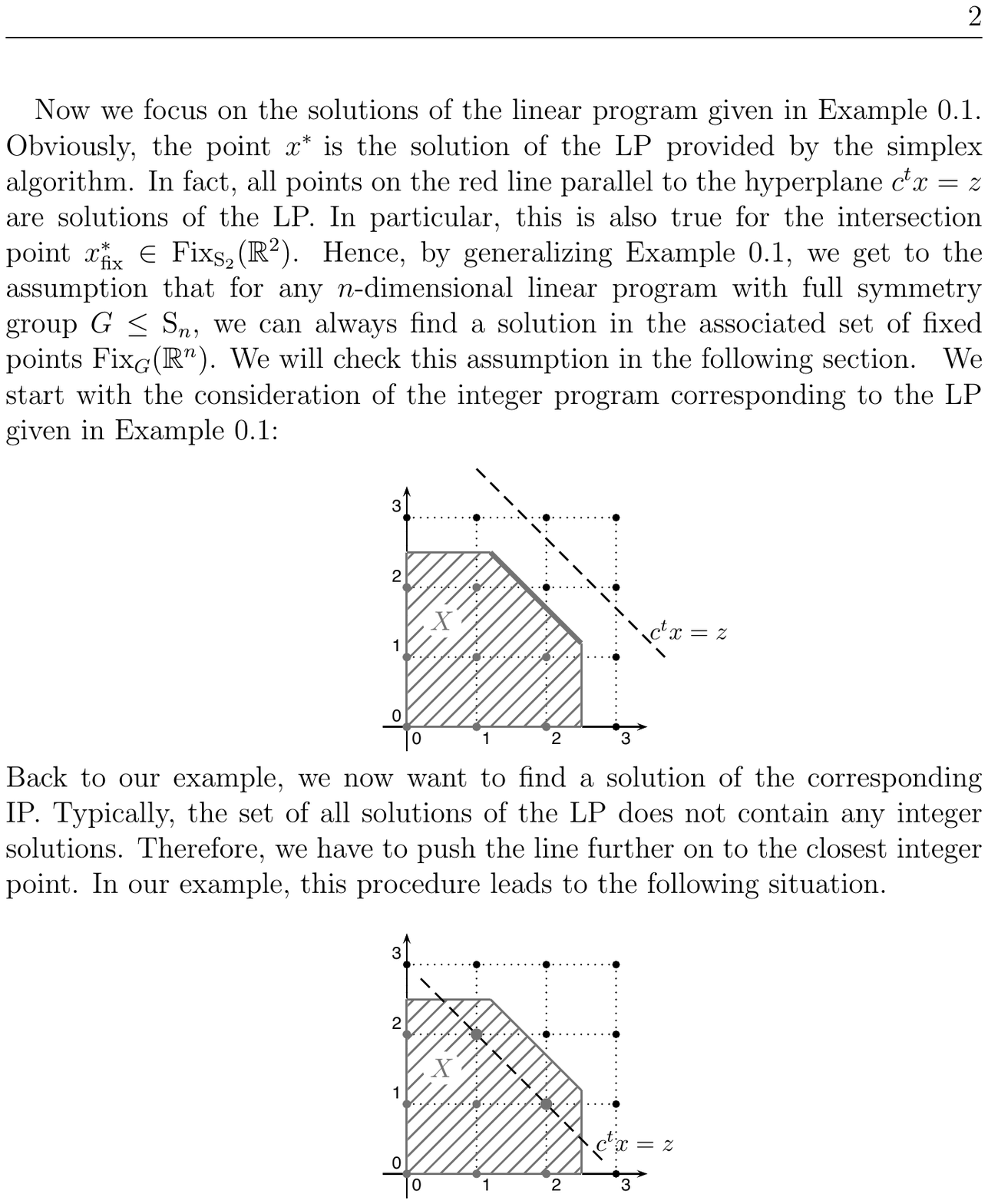}
% figure caption is below the figure
\caption{The two integer solutions}
\label{fig:2}       % Give a unique label
\end{figure}

Obviously, both accentuated points solve the IP. Furthermore, we observe that in this case,
hardly any of the affine hyperplanes $H_{c,t}$ contain integer points. Therefore, we introduce a
special term for affine hyperplanes that contain integer points.
\section{Integer-Layers}
\begin{defi}
A \emph{$c$-layer} is an affine hyperplane $H_{c,t}$ that contains at least one
integer point.
\end{defi}
The definition of $c$-layers immediately raises the following questions:
\begin{quote}
How many $c$-layers do we find, and what are the corresponding parameters $t$?
\end{quote}
To give a detailed answer to these questions, we need to distinguish two different types of utility
vectors $c$.
\begin{defi}
A utility vector is called \emph{projectively rational} if it is a real multiple of a rational
vector, and hence also of an integer vector. Otherwise, it is called \emph{projectively
irrational}. The \emph{coprime multiple} of a projectively rational utility vector $c$ is a real
multiple $c'\in\zn$ of $c$ whose entries $c'_1$ to $c'_n$ are coprime.
\end{defi}
We force uniqueness of the coprime multiple by demanding the first non-zero entry of $c'$ to be
positive. For example, the utility vector
\[(-2\sqrt{2},2\sqrt{2},4\sqrt{2},6\sqrt{2})^t=\sqrt{2}(-2,2,4,6)^t\]
is projectively rational with coprime multiple~$c'=(1,-1,-2,-3)^t$, whereas the vector
\[(\sqrt{2},\sqrt{6},2\sqrt{2},3\sqrt{2})^t=\sqrt{2}(1,\sqrt{3},2,3)^t\]
is projectively irrational.\\

Considering a utility vector $c$ and an arbitrary real multiple $c'\neq 0$ of $c$, we observe that
\[\ker(x\mapsto c^tx)=\ker(x\mapsto (c')^tx)\enspace.\]
Therefore the sets $(H_{c,t})_{t\in\rn}$ and $(H_{c',t})_{t\in\rn}$ of affine hyperplanes are
equal. In particular, this is also true for the corresponding layers.
\begin{rem}\label{c_rc_layers_equal}
Given a vector $c\in\rn\setminus\{0\}$, the set of $c$-layers is equal to the set of $(r\cdot
c)$-layers for every $r\in\mathbb{R}\setminus\{0\}$.
\end{rem}
We first want to study the configuration of $c$-layers for projectively rational utility
vectors~$c$. In this case, the $c$-layers are arranged in a very clear way.
\begin{theo}\label{c_layers_rational}
Given a projectively rational utility vector $c\neq 0$, the family of $c$-layers is given
by~${(H_{c',k\|c'\|^{-2}})}_{k\in\mathbb{Z}}$, where $c'$ is the coprime multiple of $c$.
\end{theo}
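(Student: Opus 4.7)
The plan is to reduce the question to the coprime multiple $c'$ and then use a Bézout-style argument to pin down exactly which scalars $t$ arise from integer points.

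First, by Remark~\ref{c_rc_layers_equal} applied with $r$ chosen so that $c' = r \cdot c$, the set of $c$-layers coincides with the set of $c'$-layers, so it suffices to parametrize the $c'$-layers. Next, by Lemma~\ref{x_in_specif_hyperplane_Hc_tx}, a point $z \in \mathbb{Z}^n$ lies in the affine hyperplane $H_{c',t}$ exactly when $t = (c')^t z / \|c'\|^2$. Therefore a hyperplane $H_{c',t}$ is a $c'$-layer if and only if $t \in S := \{(c')^t z / \|c'\|^2 : z \in \mathbb{Z}^n\}$, and the two families of hyperplanes in the theorem agree once I show $S = \{k \|c'\|^{-2} : k \in \mathbb{Z}\}$.

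The main step is thus to identify the image of the linear functional $z \mapsto (c')^t z$ on $\mathbb{Z}^n$ as $\mathbb{Z}$ itself. Since $c' \in \mathbb{Z}^n$, the image is contained in $\mathbb{Z}$; since the entries $c'_1, \dots, c'_n$ are coprime by the definition of the coprime multiple, Bézout's identity furnishes integers $z_1, \dots, z_n$ with $\sum_i c'_i z_i = 1$, so the image contains $1$ and hence all of $\mathbb{Z}$. Multiplying through by $\|c'\|^{-2}$ yields $S = \|c'\|^{-2} \mathbb{Z}$, as required.

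I expect the only subtlety to be the Bézout step in the generality of $n$ entries (as opposed to two), but this is classical: the subgroup of $\mathbb{Z}$ generated by $c'_1, \dots, c'_n$ equals $\gcd(c'_1, \dots, c'_n)\mathbb{Z} = \mathbb{Z}$ by the coprimality hypothesis. Everything else is essentially bookkeeping: combining the characterization from Lemma~\ref{x_in_specif_hyperplane_Hc_tx} with this surjectivity statement and the reduction via Remark~\ref{c_rc_layers_equal} delivers the claim. Uniqueness of the indexing by $k \in \mathbb{Z}$ is automatic from the parallelism of the hyperplanes $H_{c',t}$ in $t$.
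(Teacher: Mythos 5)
Your proposal is correct and follows essentially the same route as the paper's proof: both reduce to the coprime multiple $c'$ via Remark~\ref{c_rc_layers_equal}, use Lemma~\ref{x_in_specif_hyperplane_Hc_tx} to read off the parameter $t_x = (c')^t x/\|c'\|^2$ of an integer point, and invoke B\'ezout together with the coprimality of the entries of $c'$ to see that $(c')^t\mathbb{Z}^n = \mathbb{Z}$. Your phrasing in terms of the image of the functional being a subgroup of $\mathbb{Z}$ containing $1$ is a slightly tidier packaging of the paper's explicit construction of the points $k\cdot x$, but the content is identical.
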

\begin{proof}
First, we prove that the family ${(H_{c',k\|c'\|^{-2}})}_{k\in\mathbb{Z}}$ contains every integer
point~$x\in\zn$.
Let~$x$ be an arbitrary integer point. By Lemma~\ref{x_in_specif_hyperplane_Hc_tx}, we already know
that~$x$ is contained in the affine hyperplane $H_{c',t_x}$, where
\[t_x=\frac{{c'}^tx}{\|c'\|^{2}}\enspace.\]
Since the value $k:={c'}^tx$ is integral for any $x\in\zn$, we get
\[x\in H_{c',t_x}\in{\left(H_{c',\frac{k}{\|c'\|^2}}\right)}_{k\in\mathbb{Z}}\enspace.\]
Next, we show that every affine hyperplane $H_{c',k\|c'\|^{-2}}$ actually is a $c'$-layer for every
$k\in\mathbb{Z}$ by specifying a certain integer point for each~$k$. Due to the coprimeness of the
entries of the vector $c'$, B\'ezout's Identity assures the existence of integral coefficients
$x_1,\dots,x_n$ such that
\[x_1\cdot c'_1+\dots+x_n\cdot c'_n=\gcd(c'_1,\dots,c'_n)=1\enspace.\]
Since the left side of the equation is equal to ${c'}^tx$, the integer point~$x$ is contained in
the affine hyperplane~$H_{c',\|c'\|^{-2}}$, see Lemma~\ref{x_in_specif_hyperplane_Hc_tx}. For the
same reason, the point $k\cdot x$ is an integer point in
\[H_{c',\frac{c'^t(k\cdot x)}{\|c'\|^2}}=H_{c',\frac{k\cdot (c'^tx)}{\|c'\|^2}}=H_{c',\frac{k}{\|c'\|^2}}\]
for every $k\in\mathbb{Z}$. Referring to Remark~\ref{c_rc_layers_equal}, we conclude that
$H_{c',k\|c'\|^{-2}}$ is a $c$-layer for every $k\in\mathbb{Z}$.
\end{proof}
If the entries of an integral utility vector $c$ are coprime, there are no integer points on the
line spanned by $c$ between the origin and the point~$c$. Since the standard lattice~$\zn$ is
invariant under translation by integer vectors, the number of $c$-layers and their arrangement are
the same between any two points~$mc$ and~$(m+1)c$ for~$m\in\mathbb{Z}$. Therefore, there are also
no integer points on the line spanned by~$c$ between any two points~$mc$ and~$(m+1)c$,
where~$m\in\mathbb{Z}$. Applying Theorem~\ref{c_layers_rational}, we can easily count the
$c$-layers between the hyperplane $H_{c,m}$ through the point~$mc$ and the affine hyperplane
$H_{c,m+1}$ through~$(m+1)c$.
\begin{cor}\label{c_layers_rational_between_two_points}
Given a projectively rational utility vector~$c$ with coprime multiple~$c'$, the number of
$c$-layers between~$mc'$ and~$(m+1)c'$ for any~$m\in\mathbb{Z}$ is equal to the squared euclidean
norm $\|c'\|^2$ of the coprime multiple~$c'$. These $c$-layers are the affine
hyperplanes~$H_{c',k\|c'\|^{-2}}$, where
\[k\in\left\{m\|c'\|^2,\dots,(m+1)\|c'\|^2-1\right\}\enspace.\]
\end{cor}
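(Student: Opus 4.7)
The plan is to reduce the geometric counting problem to counting integers in a half-open interval, using Theorem~\ref{c_layers_rational} to parametrize the $c$-layers and Lemma~\ref{x_in_specif_hyperplane_Hc_tx} to locate the endpoints $mc'$ and $(m+1)c'$ on the axis transverse to these layers.

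First I would apply Lemma~\ref{x_in_specif_hyperplane_Hc_tx} to compute that $mc'$ lies in the hyperplane $H_{c',t}$ with $t=\frac{(c')^t(mc')}{\|c'\|^2}=m$, and similarly $(m+1)c'$ lies in $H_{c',m+1}$. Next, by Theorem~\ref{c_layers_rational}, every $c$-layer is of the form $H_{c',k\|c'\|^{-2}}$ for some $k\in\mathbb{Z}$, and since the family $(H_{c',t})_{t\in\mathbb{R}}$ consists of pairwise parallel hyperplanes orthogonal to $c$, the relation ``the layer $H_{c',k\|c'\|^{-2}}$ lies between $mc'$ and $(m+1)c'$'' translates (with the convention fixed by the statement of the corollary) into the inequalities
\[
m\;\le\;\frac{k}{\|c'\|^2}\;<\;m+1.
\]
Multiplying through by $\|c'\|^2$ yields $m\|c'\|^2\le k<(m+1)\|c'\|^2$, which is precisely the range of $k$ claimed.

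It then only remains to count. Here the key observation is that $c'\in\zn$, so $\|c'\|^2=\sum_i(c'_i)^2$ is a positive integer, and hence both endpoints $m\|c'\|^2$ and $(m+1)\|c'\|^2$ are integers. The number of integers $k$ in the half-open interval $[m\|c'\|^2,(m+1)\|c'\|^2)$ is therefore exactly $(m+1)\|c'\|^2-m\|c'\|^2=\|c'\|^2$, completing the count.

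I do not expect a serious obstacle: the argument is essentially bookkeeping once Theorem~\ref{c_layers_rational} is in hand. The only point that deserves care is matching the ``between'' convention implicit in the statement, namely including the layer through $mc'$ (the index $k=m\|c'\|^2$) while excluding the one through $(m+1)c'$; this half-open choice is exactly what makes the count come out to $\|c'\|^2$ rather than $\|c'\|^2+1$, and it is also what is needed for consecutive ranges $[mc',(m+1)c']$ to tile the line through $c$ without overlap.
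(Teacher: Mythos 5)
Your proof is correct and follows essentially the same route as the paper, which likewise derives the corollary directly from Theorem~\ref{c_layers_rational} by locating $mc'$ and $(m+1)c'$ in the layers with parameters $t=m$ and $t=m+1$ and counting the integers $k$ with $m\|c'\|^2\le k<(m+1)\|c'\|^2$ under the stated half-open convention. Your explicit bookkeeping of the interval endpoints is, if anything, slightly more detailed than the paper's brief justification.
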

To put it precisely, the number of $c$-layers includes the layer through~$mc'$ but excludes the
layer through~$(m+1)c'$. The representation of the $c$-layers in
Corollary~\ref{c_layers_rational_between_two_points} allows us not only to count the layers but
also to access every single layer directly by its characteristic parameter~$k$.
\begin{con}
Given a projectively rational utility vector $c$ with coprime multiple~$c'$, the
$c$-layer~$H_{c',k\|c'\|^{-2}}$ is called the \emph{$k$-th~$c$-layer}.
\end{con}
Note that we always refer to the coprime multiple~$c'$ of a utility vector~$c$ when we talk about
$c$-layers. Figure~3 and Figure~4 give a graphical impression of the arrangement of $c$-layers
for two different utility vectors. In both figures, the outer two $c$-layers contain the two
integer points~$mc'$ and~$(m+1)c'$ on the line spanned by~$c$. In contrast to the situation in
Figure~3, we notice that in Figure~4, the two layers between the outer two layers do not cover
all integer points.

\begin{figure}[htp]
% Use the relevant command to insert your figure file.
% For example, with the graphicx package use
\centering
 \includegraphics[width=0.8\textwidth]{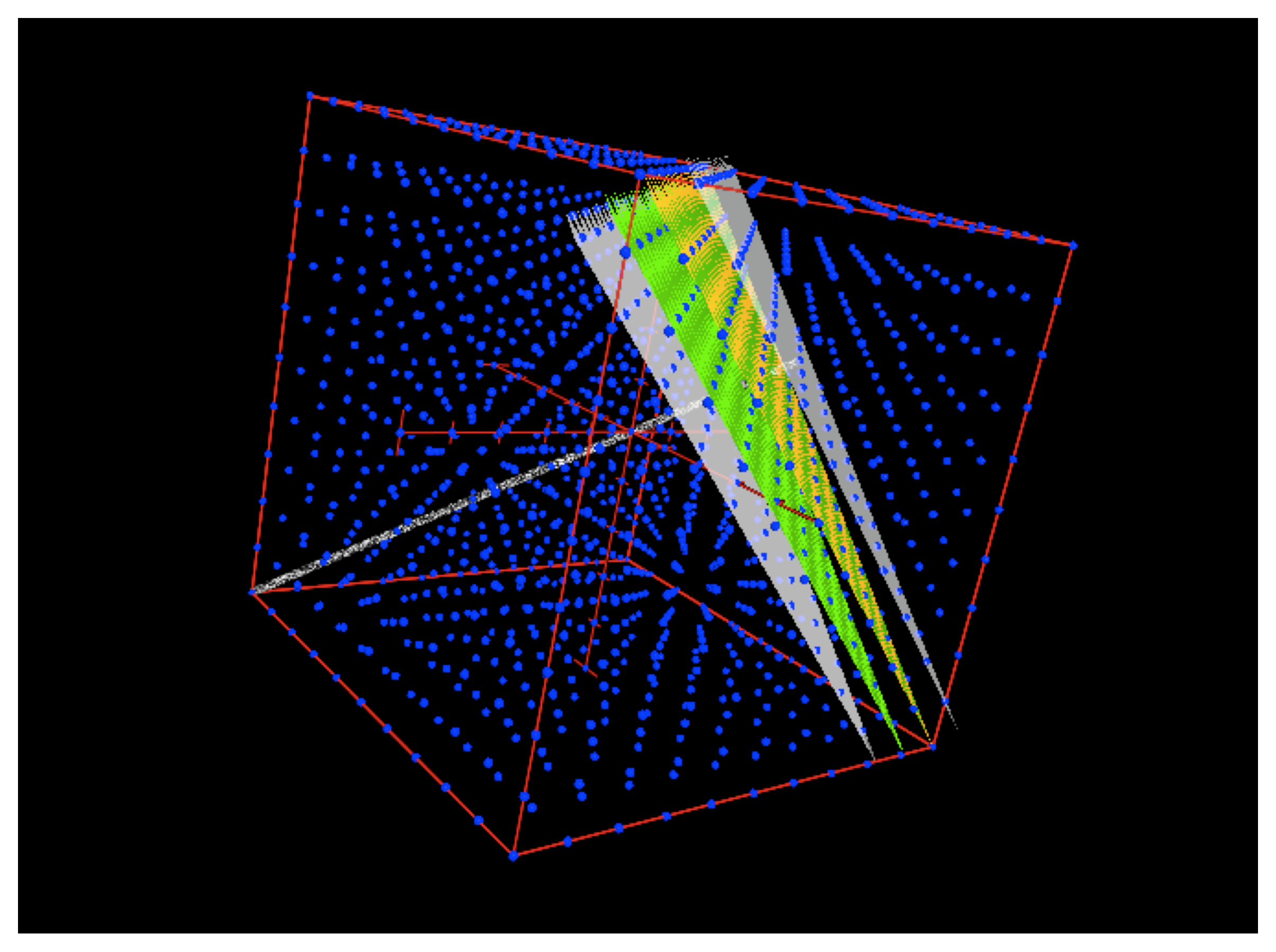}
% figure caption is below the figure
\caption{Some integer layers for $c' = (1,1,1)^t$}
\label{fig:3}       % Give a unique label
\end{figure}

\begin{figure}[htp]
% Use the relevant command to insert your figure file.
% For example, with the graphicx package use
\centering
 \includegraphics[width=0.8\textwidth]{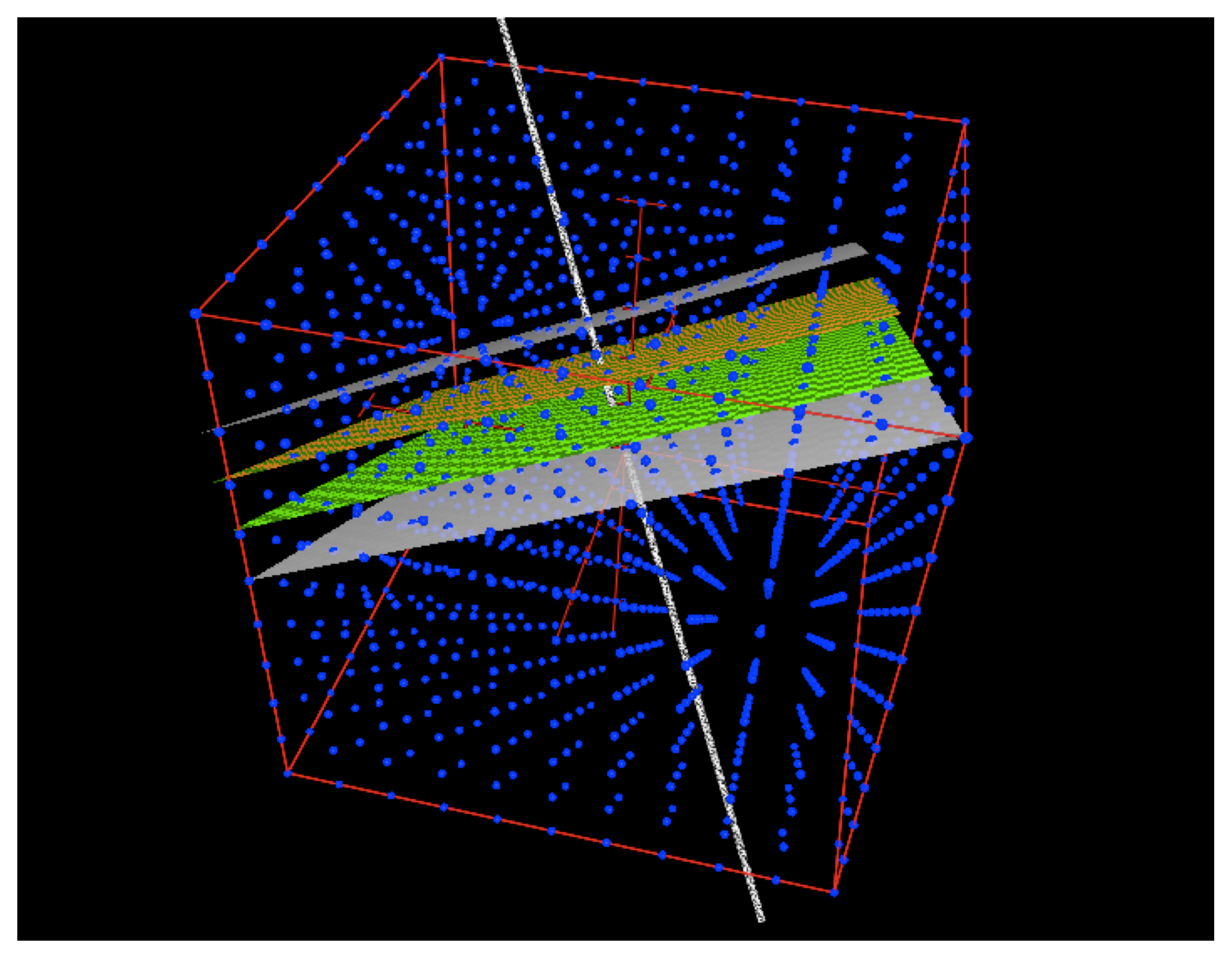}
% figure caption is below the figure
\caption{Some integer layers for $c' = (1,1,2)^t$}
\label{fig:4}       % Give a unique label
\end{figure}

Compared to the clear structure in the rational case, the arrangement of the $c$-layers for
projectively irrational utility vectors $c$ is rather complicated. In particular, we lose
finiteness of the number of $c$-layers between the origin and the point~$c$.
\begin{theo}
Given a projectively irrational utility vector~$c$, there exist infinitely many $c$-layers between
the origin and the point~$c$.
\end{theo}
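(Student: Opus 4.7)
The plan is to convert the geometric statement into a statement about the additive subgroup
\[S := c^t\zn = \mathbb{Z}c_1+\mathbb{Z}c_2+\cdots+\mathbb{Z}c_n\subseteq\mathbb{R}\]
of real numbers. By Lemma~\ref{x_in_specif_hyperplane_Hc_tx}, an integer point $x\in\zn$ lies in the $c$-layer $H_{c,t_x}$ with $t_x=c^tx/\|c\|^2$. The point $0$ lies in $H_{c,0}$ and the point $c$ lies in $H_{c,1}$; hence a $c$-layer sits strictly between these two hyperplanes if and only if its parameter $t$ belongs to $(0,1)$, equivalently if it equals $t_x$ for some $x\in\zn$ with $c^tx\in(0,\|c\|^2)$. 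Distinct values of $c^tx$ produce distinct (parallel) affine hyperplanes, so it suffices to show that $S\cap(0,\|c\|^2)$ is infinite.

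For this I would invoke the standard dichotomy for additive subgroups of $(\mathbb{R},+)$: any such subgroup is either of the form $\alpha\mathbb{Z}$ for some $\alpha\geq 0$ (the discrete case) or dense in $\mathbb{R}$. I would then rule out the discrete case using projective irrationality. Indeed, if $S=\alpha\mathbb{Z}$ with $\alpha>0$, then every $c_i\in S$ can be written as $c_i=k_i\alpha$ for some $k_i\in\mathbb{Z}$, so $c=\alpha\cdot(k_1,\ldots,k_n)^t$ is a real multiple of an integer vector, contradicting projective irrationality. The trivial case $\alpha=0$ would force $c=0$, which is also excluded.

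Consequently $S$ is dense in $\mathbb{R}$, so $S\cap(0,\|c\|^2)$ is infinite, and each of its elements yields a distinct $c$-layer strictly between the origin and $c$.

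The only genuinely load-bearing step is the subgroup dichotomy for $(\mathbb{R},+)$; the rest is bookkeeping. The main obstacle, such as it is, lies in making the equivalence
\[S\ \text{is cyclic with positive generator}\iff c\ \text{is projectively rational}\]
fully precise, so that projective irrationality is used exactly once and in the right direction; once this is clear, density of $S$ and hence the existence of infinitely many $c$-layers in $(0,1)$ follows immediately.
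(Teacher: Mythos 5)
Your proposal is correct, but it takes a genuinely different route from the paper. You reduce the claim to the density of the additive subgroup $S=c^t\mathbb{Z}^n\subseteq\mathbb{R}$, using the classification of subgroups of $(\mathbb{R},+)$ as either cyclic or dense, and you correctly observe that cyclicity of $S$ would force $c$ to be a real multiple of an integer vector, contradicting projective irrationality (with the degenerate case $S=\{0\}$ excluded since $c\neq 0$). The paper instead argues constructively: after scaling and permuting so that one coordinate of $c$ equals $1$ and another equals an irrational number $r$, it exhibits the explicit integer points $x^{(k)}$ with entries $-\lfloor rk\rfloor$ and $k$ in those two slots and zeros elsewhere; the resulting values $rk-\lfloor rk\rfloor$ are pairwise distinct (by the $\mathbb{Q}$-linear independence of $1$ and $r$) and lie in $[0,1]$, so the $x^{(k)}$ occupy infinitely many distinct layers in the bounded slab. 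The two arguments buy different things: yours is more conceptual, needs no choice of a distinguished irrational ratio, and isolates exactly where projective irrationality enters (it is equivalent to non-discreteness of $S$); the paper's is self-contained and produces explicit witnesses in each layer, at the cost of essentially re-proving, in the two-coordinate special case, the non-discrete half of the dichotomy you cite. If you write your version up, you should either prove the dichotomy or give a precise reference for it, since it is the only load-bearing external ingredient; everything else in your reduction (the identification of the relevant layers with parameters $t=c^tx/\|c\|^2\in(0,1)$, i.e.\ $c^tx\in(0,\|c\|^2)$, and the fact that distinct values of $c^tx$ give distinct parallel hyperplanes) is sound.
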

\begin{proof}
Being projectively irrational, the vector $c$ is a real multiple of a vector $c'$ of the form
\[c'=(1,c'_2,\dots,c'_{j-1},r,c'_{j+1},\dots,c'_n)^t\enspace,\]
where $r\in\mathbb{R}\setminus\mathbb{Q}$, and $c'_i\in\mathbb{R}$ for $i\in\{2,\dots,j-1,j+1,\dots,n\}$. We define a sequence of integer points by
\[\left(x^{(k)}\right)_{k\in\mathbb{N}}:=\left((x_1^{(k)},\dots,x_n^{(k)})^t\right)_{k\in\mathbb{N}},\enspace x_i^{(k)}=\begin{cases}
-\lfloor rk\rfloor&\text{ if }i=1\\
k&\text{ if }i=j\\
0&\text{ otherwise }\enspace.
\end{cases}\]
Since $r$ is irrational, the equation
\[y_1+ry_2=y_1'+ry_2'\]
implies $y_1=y_1'$ and $y_2=y_2'$ for arbitrary integers $y_1,y_2,y_1',y_2'$. Hence, the utility values
\[c^tx^{(k)}=-\lfloor rk\rfloor+r\cdot k\]
are pairwise distinct for different $k\in\mathbb{N}$. Furthermore, we observe that
\[0\leq rk-\lfloor rk\rfloor\leq 1\leq \|c\|^2\enspace.\]
Therefore, every integer point of the sequence
$\left(x^{(k)}\right)_{k\in\mathbb{N}}$ is contained in a separate $c'$-layer
$H_{c',t_{x^{(k)}}}$, with $0\leq t_{x^{(k)}}\leq 1$, see
Lemma~\ref{x_in_specif_hyperplane_Hc_tx}. Referring to
Remark~\ref{c_rc_layers_equal}, this shows that we have infinitely many
$c$-layers between the origin and the point $c$.
\end{proof}
Apparently, stepwise sifting through $c$-layers is practicable only if the number of $c$-layers is
not too large but at least finite. Therefore, we will only follow up this method with respect to
projectively rational utility vectors.\\
The basic idea of our approach is to generalize the graphical method we studied in the beginning of
this chapter. By Remark~\ref{utility_val_const_on_hyperplane}, we know that the utility value is
constant on every $c$-layer. Hence, we start looking for a feasible integer point on the $c$-layer
next to an LP solution, which we can access directly because of the characterization given in
Corollary~\ref{c_layers_rational_between_two_points}. If there is no feasible integer point on this
layer, we go on to the $c$-layer with the next smaller utility value, again accessible due to
Corollary~\ref{c_layers_rational_between_two_points}.
The first feasible integer point we find by this method then is a solution to the IP problem.\\
Of course, it is not clear yet how to test feasibility of infinitely many integer points that are
contained in every single $c$-layer. Even if the IP has the additional assumption of positive
solutions, it is still not practicable to test a possibly exponentially large number of integer
points. Furthermore, we would like to detect infeasibility of an IP problem without exhaustive
testing of all possible integer points.\\
We will tackle these problems by means of symmetry. In contrast to the LP case, transitivity of the
group action in the IP case is not the end of the line but the initial assumption for our analysis.

\section{Transitive Actions}\label{subsection_transitive_actions}
Consider an LP with a symmetry group $G$ acting transitively on the standard basis. Then the
utility vector is a real multiple of the integer vector
\[\overline{c}:=(1,\dots,1)^t\enspace,\]
see Corollary 9 of~\cite{herrboedi}. Since the utility value is constant on any $c$-layer, compare
Remark~\ref{utility_val_const_on_hyperplane}, we obtain a useful characterization of the points on
the $k$-th $c$-layer by applying Lemma~\ref{x_in_specif_hyperplane_Hc_tx}.
\begin{rem}\label{sum_of_coord_equal_to_k}
Given the utility vector $c=(\gamma,\dots,\gamma)^t$, an integer point~$x$ is contained in the
$k$-th $c$-layer if and only if the sum of its coordinates is equal to~$k$.
\end{rem}
Further, we proved in Theorem 14 of~\cite{herrboedi} that the set of fixed points
$\FixGR$ is one-dimensional. Hence, it only consists of multiples of the utility vector, compare
Remark 12 of~\cite{herrboedi}. If we solve the LP according to the substitution algorithm we discussed in
the previous section, we get a solution of the LP of the form
\begin{align*}%\label{LP_solution_a_dots_a}
\xstarfix=(a,\dots,a)^t\in\FixGR\enspace.
\end{align*}
Therefore, the $c$-layer to start with in the transitive case is the $c$-layer next to this
solution given by~$H_{\overline{c},k\|\overline{c}\|^{-2}}$, where~$k=\lfloor na\rfloor$. If the IP
is feasible, we stop as soon as we find a feasible point. But what could be a reasonable stopping
criterion if the IP does not have any solutions? In general, the problem to decide whether an IP is
feasible or not, is NP-complete, see e.g.~\cite{schrijver}, p.~245. We are now going to study this
problem for transitive actions.%\\[0.7cm]
\subsection*{\textbf{Detecting Infeasibility}}
We start by defining a certain point of reference for every $c$-layer that shows an exceptional
property in the transitive case.
\begin{defi}
The \emph{center} of a $c$-layer is the intersection point of the $c$-layer and the line spanned by
the utility vector $c$.
\end{defi}
Note that in the transitive case, the center~$m_k$ of the $k$-th~$c$-layer is given~by
\[m_k=(\frac{k}{n},\dots,\frac{k}{n})^t\enspace.\]
If we consider two feasible points~$x_1$ and~$x_2$, the convexity of the feasible region~$X$
guarantees that the segment between~$x_1$ and~$x_2$ is feasible. Conversely, if only~$x_1$ is
feasible, then no point beyond~$x_2$ on the ray from~$x_1$ to~$x_2$ can be feasible. In particular,
we can apply this reasoning to the solution~$(a,\dots,a)^t$ of the LP and the center of any
$c$-layer. In the transitive case, the line through~$(a,\dots,a)^t$ and a center $m_k$ is equal to
the line generated by the utility vector~$c$. Hence, we get the following statement.
\begin{rem}\label{one_center_infeasible_all_lower_centers_infeasible}
Let~$(a,\dots,a)^t$ be a solution of an LP with a symmetry group acting transitively on the
standard basis. If the center of the $k$-th $c$-layer is infeasible for some $k\leq \lfloor
na\rfloor$, then the center of the $l$-th $c$-layer is infeasible for any $l\leq k$.
\end{rem}
Note that the following statement holds for any affine hyperplane~$H_{c,t}$,
where~$c=(\gamma,\dots,\gamma)^t$ and~$t\in\mathbb{R}$. However, we are interested in the result
only in relation to $c$-layers.
\begin{theo}\label{center_feasible}
Given an LP with a symmetry group $G$ acting transitively on the standard basis, a $c$-layer is
feasible if and only if its center is feasible.
\end{theo}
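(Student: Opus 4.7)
The backward direction (center feasible $\Rightarrow$ layer feasible) is immediate, since the center of a $c$-layer lies on the $c$-layer itself. So the substance of the theorem is the forward direction: assuming the $c$-layer $H_{c,t}$ contains some feasible point, produce the feasibility of its center $m_k = (k/n,\ldots,k/n)^t$.

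My plan is to realise the center as a $G$-average of a feasible point. Start with any feasible $x \in X \cap H_{c,t}$. Since $G$ is a symmetry group of $\Lambda$, the feasible region $X$ is $G$-invariant, so the entire orbit $x^G$ lies in $X$. Moreover, by Remark~\ref{same_sym_intersection_X_H_ct}, the orbit $x^G$ is contained in the same $c$-layer $H_{c,t}$ as $x$. Now form the barycenter
\[
\overline{x} \;:=\; \frac{1}{|G|} \sum_{g \in G} g\cdot x.
\]
By convexity of $X$ (Remark~\ref{X_convex}), $\overline{x} \in X$; and since $H_{c,t}$ is an affine hyperplane closed under affine combinations, $\overline{x} \in H_{c,t}$ as well.

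The point $\overline{x}$ is, by construction, fixed by every element of $G$, hence lies in $\FixGR$. In the transitive case, the earlier result Theorem~14 of~\cite{herrboedi} together with Remark~12 of~\cite{herrboedi} (both invoked in the preceding discussion) tell us that $\FixGR$ is one-dimensional and consists exactly of multiples of $c = (\gamma,\ldots,\gamma)^t$. Thus $\overline{x}$ lies on the line spanned by $c$, and simultaneously on the $c$-layer $H_{c,t}$. But this intersection point is, by definition, the center of the $c$-layer. Hence $\overline{x}$ equals the center, which is therefore feasible.

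I do not foresee a real obstacle: the argument is essentially a symmetrisation/averaging trick, and all the ingredients (invariance of $X$ under $G$, invariance of $c$-layers under $G$, convexity of $X$, and one-dimensionality of $\FixGR$ along $c$) are already available from the preceding remarks and from~\cite{herrboedi}. The only subtlety worth checking is that $\overline{x}$ really is the center and not some other point of the line through the origin along $c$ — but this is forced by $\overline{x} \in H_{c,t}$, since a line transverse to a hyperplane meets it in a single point.
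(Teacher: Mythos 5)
Your argument is correct, and it reaches the same geometric punchline as the paper --- the one-dimensional fixed-point set $\FixGR$, being the line spanned by $c=(\gamma,\dots,\gamma)^t$, meets the layer $H_{c,t}$ in exactly one point, namely the center --- but it gets there by a different mechanism. The paper restricts the LP to the feasible $c$-layer (legitimate by Remark~\ref{same_sym_intersection_X_H_ct}) and then invokes the substitution algorithm from~\cite{herrboedi} as a black box: that algorithm returns a solution lying in $\FixGR$, and since on a $c$-layer the utility is constant, that solution is a feasible point on the fixed line, hence the center. You instead produce the feasible fixed point explicitly, as the barycenter $\overline{x}=\frac{1}{|G|}\sum_{g\in G}g\cdot x$ of the orbit of a feasible point $x$ on the layer; convexity of $X$, $G$-invariance of $X$ and of $H_{c,t}$, and linearity of the action then give $\overline{x}\in X\cap H_{c,t}\cap\FixGR=\{m_k\}$. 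Your route is more elementary and self-contained: it needs only Remark~\ref{X_convex}, Remark~\ref{same_sym_intersection_X_H_ct}, finiteness of $G$, and the one-dimensionality of $\FixGR$, and it sidesteps the (slightly delicate) point that the restricted LP actually admits a solution reachable by the substitution algorithm. What the paper's phrasing buys is consistency with the algorithmic narrative of the surrounding text, where the substitution algorithm is the standing tool; what yours buys is a proof readable without the companion paper. Both directions of the equivalence are handled correctly, the backward one being trivial since the center lies on its layer.
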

\begin{proof}
We cut down the feasible region of the LP to a feasible $c$-layer. The feasibility of the $c$-layer
assures that the substitution algorithm yields a solution to the resulting LP. By
Remark~\ref{same_sym_intersection_X_H_ct}, we know that $G$ still is a symmetry group of the
resulting LP. Hence, both LP problems share the same one-dimensional set of fixed points consisting
of the line $l$ spanned by $c=(\gamma,\dots,\gamma)^t$, compare Remark 12 of~\cite{herrboedi}.
Therefore, the solution to the resulting LP provided by the substitution algorithm is the
intersection point of $l$ and the $c$-layer, i.e., the center. Hence, in particular, the center is
feasible for the resulting LP, and therefore also for the original LP.
\end{proof}
Conversely, we conclude that there is no feasible point on a $c$-layer whose center is not
feasible. Hence, referring to Remark~\ref{one_center_infeasible_all_lower_centers_infeasible},
there are no feasible points in $c$-layers that have smaller utility values than the $c$-layer with
the first infeasible center. Therefore, we only need to search the layers beginning with the
$\lfloor na \rfloor$-th $c$-layer down to the $(n\lfloor a\rfloor)$-th $c$-layer. If the center of
one of the layers is infeasible, we already know that the IP is infeasible. Otherwise, we arrive at
the last layer and test the feasibility of the center~$(\lfloor a\rfloor,\dots,\lfloor
a\rfloor)^t$. Since the center is integral, we then either have found a solution or we conclude
that the IP is infeasible. Thus, the algorithm stops after having searched at most~$n$ $c$-layers,
see Corollary~\ref{c_layers_rational_between_two_points}.
\begin{cor}\label{testing_of_at_most_n_layers_suffices}
Let~$(a,\dots,a)^t$ be a solution of an LP with a symmetry group acting transitively on the
standard basis. Then stepwise sifting through the $\lfloor na \rfloor$-th $c$-layer down to the
$(n\lfloor a\rfloor)$-th $c$-layer either leads to a solution of the corresponding IP or reveals
its infeasibility. The algorithm stops after at most~$n$ steps.
\end{cor}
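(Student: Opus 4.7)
The plan is to assemble three already-established facts: Remark~\ref{sum_of_coord_equal_to_k} (integer points on the $k$-th $c$-layer have coordinate sum $k$), Theorem~\ref{center_feasible} (a layer is feasible iff its center is), and Remark~\ref{one_center_infeasible_all_lower_centers_infeasible} (infeasibility of one center forces infeasibility of every lower center). The argument then reduces to fixing the correct range of layer indices and reading off the two possible outcomes of the scan.

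First I would pin down the search range. With $c=(\gamma,\dots,\gamma)^t$ and LP optimum $na\gamma$, any feasible $x\in\zn$ satisfies $c^tx\leq na\gamma$, so its coordinate sum is at most $\lfloor na\rfloor$; by Remark~\ref{sum_of_coord_equal_to_k} no integer feasible point lies above the $\lfloor na\rfloor$-th layer. For the lower end I would invoke Corollary~\ref{c_layers_rational_between_two_points} with $\overline{c}=(1,\dots,1)^t$ and $\|\overline{c}\|^2=n$: the $c$-layers between $\lfloor a\rfloor\overline{c}$ and $(\lfloor a\rfloor+1)\overline{c}$ are indexed exactly by $\{n\lfloor a\rfloor,\dots,n\lfloor a\rfloor+n-1\}$. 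Since $n\lfloor a\rfloor\leq\lfloor na\rfloor\leq n\lfloor a\rfloor+n-1$, the prescribed range sits inside this block and therefore contains at most $n$ layers, yielding the claimed step count.

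For correctness I would process the layers from index $\lfloor na\rfloor$ downwards and at each step test the center $m_k=(k/n,\dots,k/n)^t$. If some center turns out to be infeasible, Remark~\ref{one_center_infeasible_all_lower_centers_infeasible} and Theorem~\ref{center_feasible} together imply that no point of any lower layer lies in $X$ at all, hence no integer feasible point of lower utility value can exist; combined with the upper bound on admissible layer indices, the IP is then declared infeasible. If instead the center is feasible, the layer is searched for an integer point in $X$, and any such point found is an IP solution because the layers are scanned in order of decreasing utility value. At the terminal layer $k=n\lfloor a\rfloor$ the center itself is the integer point $(\lfloor a\rfloor,\dots,\lfloor a\rfloor)^t$, so the center test alone resolves feasibility at that level, guaranteeing that the algorithm terminates within the $n$ steps counted above. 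The only mild delicacy is applying Theorem~\ref{center_feasible} in the contrapositive direction to rule out \emph{integer} feasible points (not merely real ones) on lower layers --- but this is a trivial implication, so nothing beyond careful bookkeeping on top of the earlier lemmas is required.
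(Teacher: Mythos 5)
Your argument is correct and follows essentially the same route as the paper: the paper likewise combines Theorem~\ref{center_feasible} with Remark~\ref{one_center_infeasible_all_lower_centers_infeasible} to justify stopping at the first infeasible center, uses the integrality of the terminal center $(\lfloor a\rfloor,\dots,\lfloor a\rfloor)^t$ to resolve the last layer, and counts at most $n$ layers via Corollary~\ref{c_layers_rational_between_two_points}. Your explicit verification that $n\lfloor a\rfloor\leq\lfloor na\rfloor\leq n\lfloor a\rfloor+n-1$ and that no feasible integer point lies above the $\lfloor na\rfloor$-th layer merely makes precise what the paper leaves implicit.
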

Corollary~\ref{testing_of_at_most_n_layers_suffices} discloses that the complexity of the
infeasibility problem only depends on the efficiency of the search algorithm that is used to sift
through a single $c$-layer. Therefore, we will now focus on the searching of a $c$-layer, i.e., on
the problem how to check the IP-feasibility of a $c$-layer without testing every single integer
point on the layer.
\subsection*{\textbf{Reducing to Neighbors}}
The main idea is to define an appropriate set of integer points -- the set of neighbors -- such
that the feasibility of any exterior integer point implies the feasibility of an integer point in
the same $c$-layer that belongs to the set of neighbors. In this case, it suffices to test the
feasibility of the neighbors. Unfortunately, transitivity is not strong enough to be able to reduce
the problem to the center which is not necessarily integral. In contrast to the LP case, compare
Corollary 19 of~\cite{herrboedi}, the Purkiss Principle of symmetric solutions of symmetric
problems is not suitable for IP problems. Therefore, the following definition leads to the smallest
possible set of neighbors with respect to the Euclidean distance.
\renewcommand{\labelitemi}{--}
\begin{defi}
Given a $c$-layer, a \emph{neighbor} is an integer point on the $c$-layer that has minimal
Euclidean distance from the center of the $c$-layer.
\end{defi}
Due to the simple structure of the utility vector in the transitive case (see ~\cite{herrboedi}), we can easily describe
the corresponding set of neighbors by using Remark~\ref{sum_of_coord_equal_to_k}.
\begin{rem}\label{set_of_neighbors}
Given the utility vector~$c=(\gamma,\dots,\gamma)^t$, and an integer $k=dn+r$,
where~$d\in\mathbb{Z}$ and $r\in\{0,\dots,n-1\}$, the set of neighbors $\neigh_k$ in the $k$-th
$c$-layer consists of all integer points that have $r$ entries equal to $d+1$ and $n-r$ entries
equal to $d$. The number of neighbors in the $k$-th $c$-layer is given by
\[|\neigh_k|=\binom{n}{r}\enspace.\]
\end{rem}
Consider two points~$x$ and~$y$ on the same hyperplane~$H_{c,t}$ satisfying
\[\|x\|^2\geq\|y\|^2\enspace.\]
Because of the orthogonality of~$H_{c,t}$ and the line spanned by~$c$, the Pythagorean theorem
yields that in this case, we also have
\[\|x-m\|^2\geq\|y-m\|^2\enspace,\]
where~$m$ is the center of~$H_{c,t}$. Therefore, we only need to determine the distance between the
origin and two different points on the same $c$-layer in order to decide which of them is closer to
the center.
\begin{rem}\label{distance_origin_suff_determ_distance_center}
Given two points~$x,y\in H_{c,t}$, then~$\|x\|^2\geq\|y\|^2$ implies
\[\|x-m\|^2\geq\|y-m\|^2\enspace,\]
where~$m$ is the center of~$H_{c,t}$. In particular, all elements of an orbit with respect to a
group~$G\leq\Sn{n}$ have the same distance to the center.
\end{rem}
The following lemma describes a method to approach the set of neighbors without leaving the
feasible region. In the proof, Remark~\ref{distance_origin_suff_determ_distance_center} helps us to
avoid technical difficulties.
\begin{lem}\label{convex_comb_closer_and_feasible}
Given an LP with symmetry group $G$ and a feasible point $x$ in the $k$-th $c$-layer, any interior
point of the segment determined by $x$ and a point $x^g\neq x$, where $g\in G$, is feasible and
closer to the center of the $k$-th $c$-layer than~$x$.
\end{lem}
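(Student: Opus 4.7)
The plan is to establish feasibility first, then the distance estimate, by exploiting (a) convexity of $X$ together with the fact that $G$ preserves $X$, and (b) the observation that $x$ and $x^g$ lie on a common sphere around the center $m$ of the $k$-th $c$-layer.

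For feasibility, I would argue that because $G$ is a symmetry group of the LP, the feasible region $X$ is $G$-invariant, so $x^g\in X$. By Remark~\ref{X_convex}, $X$ is convex, hence every point of the segment between the feasible points $x$ and $x^g$ is feasible as well. In addition, by Remark~\ref{same_sym_intersection_X_H_ct}, the whole orbit $x^G$ lies in the same $c$-layer as $x$, so in particular $x^g$ belongs to the $k$-th $c$-layer; because the $c$-layer is affine, the entire segment from $x$ to $x^g$ stays on it.

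For the distance claim, I would note that $g$ acts via a permutation matrix, which is orthogonal, so $\|x^g\| = \|x\|$. Since $x$ and $x^g$ lie on the same affine hyperplane $H_{c, t}$, Remark~\ref{distance_origin_suff_determ_distance_center} gives $\|x^g - m\| = \|x - m\|=:r$, i.e., both points sit on the sphere of radius $r$ centered at $m$. It then remains to show that every interior point of the segment $[x, x^g]$ lies strictly inside this sphere. Writing an interior point as $y = \lambda x + (1-\lambda) x^g$ with $\lambda \in (0,1)$, and setting $u := x-m$, $v := x^g - m$, we compute
\[
\|y-m\|^2 = \lambda^2\|u\|^2 + 2\lambda(1-\lambda)\langle u,v\rangle + (1-\lambda)^2\|v\|^2.
\]
Using $\|u\|=\|v\|=r$ and the Cauchy--Schwarz inequality $\langle u,v\rangle \leq \|u\|\|v\| = r^2$, which is strict because $u\neq v$ (as $x^g \neq x$ while $\|u\|=\|v\|$), we obtain
\[
\|y-m\|^2 < r^2\bigl(\lambda + (1-\lambda)\bigr)^2 = r^2 = \|x-m\|^2.
\]

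The argument is quite short, and I do not anticipate a serious obstacle; the only subtle point is that strictness of Cauchy--Schwarz requires $u \neq v$, for which one has to invoke the hypothesis $x^g \neq x$ (together with the fact that translation by $-m$ is injective). The rest is just convexity of $X$ plus the orbit-preserves-norm observation.
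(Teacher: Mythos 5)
Your proposal is correct and follows essentially the same route as the paper: feasibility via $G$-invariance of $X$ plus convexity, and the distance claim via the fact that $x$ and $x^g$ have equal norm together with strict convexity of the squared Euclidean norm. The only cosmetic difference is that you translate to the center $m$ and invoke strict Cauchy--Schwarz, whereas the paper stays at the origin (using Remark~\ref{distance_origin_suff_determ_distance_center} to transfer the comparison to $m$) and expands $\|x\|^2-\|y\|^2=p(1-p)\|x-x^g\|^2>0$ explicitly; both computations are equivalent.
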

\begin{proof}
Note that $x$, $x^g$ and -- since $c$-layers are affine hyperplanes -- any convex combination
\[y:=px+(1-p)x^g\]
are in the same $c$-layer, compare Remark~\ref{same_sym_intersection_X_H_ct}. Furthermore, the
feasibility of~$x$ implies the feasibility of~$x^g$, see Remark 6 of~\cite{herrboedi}, and
therefore also the feasibility of~$y$ due to the convexity of the feasible region~$X$. Hence,
referring to Remark~\ref{distance_origin_suff_determ_distance_center}, we only need to show that
the squared Euclidean norm of~$y$ is smaller than~$\|x\|^2$ for any~$p\in]0;1[$. Since~$\|x\|^2$ is
equal to~$\|x^g\|^2$, we can write
\begin{align*}
\|x\|^2&=p^2\|x\|^2+(1-p)^2\|x\|^2+2p(1-p)\|x\|^2=\\
&=p^2\|x\|^2+(1-p)^2\|x^g\|^2+p(1-p)\|x\|^2+p(1-p)\|x^g\|^2\enspace,
\end{align*}
and therefore
\begin{align*}
\|x\|^2-\|y\|^2&=\left(p^2\|x\|^2+(1-p)^2\|x^g\|^2+p(1-p)\|x\|^2+p(1-p)\|x^g\|^2\right)-\\
&-\left(p^2\|x\|^2+2p(1-p)\sum_{i=1}^n x_ix_{i^g}+(1-p)^2\|x^g\|^2\right)=\\
&=p(1-p)\|x\|^2+p(1-p)\|x^g\|^2-2p(1-p)\sum_{i=1}^n x_ix_{i^g}=\\
&=p(1-p)(\|x-x^g\|^2)>0,
\end{align*}
since $x\neq x^g$ and $p\in]0;1[$.
\end{proof}
Hence, we can approach the set of neighbors by considering convex combinations of two elements of
the same orbit. Since we are interested in solutions to IP problems, the convex combinations should
not only be feasible with respect to the LP but also with respect to the corresponding IP, i.e.,
they should be integral in addition. The next theorem shows that we can find such integral convex
combinations for any feasible integer point as long as the degree of transitivity of the symmetry
group is large enough.
\begin{theo}\label{always_feasible_integer_in_smaller_ring}
Let $G\leq \Sn{n}$ be a symmetry group of an LP acting
$(\left\lfloor\frac{n}{2}\right\rfloor+1)$-transitively on the standard basis, and~$n\geq 2$. If an
integer point~$x$ is feasible and not a neighbor, then there exists a feasible integer point in the
same $c$-layer that is closer to the center of the $c$-layer than~$x$.
\end{theo}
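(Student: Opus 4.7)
The plan is to build the closer integer point $y$ as a rational convex combination of $x$ and a single orbit element $x^g$, tuned so that all coordinates of the combination land in $\mathbb{Z}$. Concretely, for any $g\in G$ with $x^g\neq x$, put $D:=x^g-x$ and $d:=\gcd\{D_\ell\mid D_\ell\neq 0\}$; then
\[
y:=x+\tfrac{1}{d}\,D=\bigl(1-\tfrac{1}{d}\bigr)x+\tfrac{1}{d}\,x^g
\]
is integer by construction, and whenever $d\geq 2$ it is a strict interior convex combination of the feasible points $x,x^g$. Lemma~\ref{convex_comb_closer_and_feasible} then gives that $y$ is feasible and lies strictly closer to the center of the $k$-th $c$-layer than $x$. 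Hence the whole task reduces to exhibiting a single $g\in G$ yielding $d\geq 2$.

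The condition $d\geq 2$ is equivalent to $x_{g^{-1}(\ell)}\equiv x_\ell\pmod 2$ for every $\ell$, i.e.\ to $g$ stabilising the parity partition $\{E,O\}$ with $E:=\{\ell:x_\ell\text{ even}\}$ and $O:=\{\ell:x_\ell\text{ odd}\}$; in addition $g$ must fail to stabilise every value class of $x$, so that $x^g\neq x$. Let $T$ denote the smaller of $E,O$, so that $|T|\leq\lfloor n/2\rfloor$. The $(\lfloor n/2\rfloor+1)$-transitivity lets me prescribe $g$ on up to $\lfloor n/2\rfloor+1$ points; in particular, fixing $T$ pointwise still leaves the stabiliser $G_{(T)}$ at least $(\lfloor n/2\rfloor+1-|T|)$-transitive---hence certainly transitive---on $S:=\{1,\dots,n\}\setminus T$.

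I then split on the structure of $x$. If $S$ (or $T$) contains two indices $\ell_1,\ell_2$ of different $x$-value, the desired $g$ is obtained either by fixing $T$ pointwise and using $G_{(T)}$ to send $\ell_1\in S$ to $\ell_2\in S$, or by prescribing $g$ on $T$ as the transposition swapping $\ell_1,\ell_2\in T$. In either alternative $g$ fixes $\{E,O\}$ setwise, produces $D\neq 0$ with every entry even, and so $d\geq 2$. If instead both $E$ and $O$ are value-uniform, say $x_\ell=u$ on $E$ and $x_\ell=v$ on $O$, then $u,v$ have opposite parity, so $u-v$ is odd; since $x$ is not a neighbor, $|u-v|\neq 1$, forcing $|u-v|\geq 3$. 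Any $g\in G$ sending an index of $E$ to one of $O$ (produced by mere $2$-transitivity) then yields $D$ whose nonzero entries are all $\pm(u-v)$, so $d=|u-v|\geq 3$. The degenerate case $E=\emptyset$ or $O=\emptyset$ is subsumed in the first alternative, because parity preservation is then automatic and the non-neighbor assumption still forces $x$ to have at least two distinct coordinate values.

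I expect the main obstacle to be the subcase where the smaller parity class is value-uniform but the larger is not. Fixing $T$ pointwise consumes $|T|$ of the $\lfloor n/2\rfloor+1$ freely prescribable slots, and the required movement within $S$ relies on the residual $(\lfloor n/2\rfloor+1-|T|)$-transitivity of $G_{(T)}$ on $S$. This is the precise place where the assumption is sharp: weakening the degree of transitivity by one would invalidate this step for borderline parity splits, which explains the $\lfloor n/2\rfloor+1$ appearing in the hypothesis.
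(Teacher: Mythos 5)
Your proof is correct and follows essentially the same route as the paper's: split the indices by the parity of the coordinates of $x$, use the $(\lfloor n/2\rfloor+1)$-transitivity to find a $g$ that controls the smaller parity class while moving some coordinate value, and feed the resulting rational interior point $x+\frac{1}{d}(x^g-x)$ of the segment into Lemma~\ref{convex_comb_closer_and_feasible}; your two cases correspond exactly to the paper's (some parity class carries two distinct values, versus both classes value-uniform). Two minor remarks: your claim that $d\geq 2$ is \emph{equivalent} to $g$ preserving parities is only an implication from right to left (your own second case produces $d\geq 3$ with a $g$ that mixes the parity classes), though nothing downstream relies on the false direction; and your derivation of $|u-v|\geq 3$ from parity together with the non-neighbor hypothesis is a clean substitute for the paper's argument, which instead translates $x$ into the slab between the origin and $(1,\dots,1)^t$ and bounds the coordinate sum to get $|x_i-x_{i'}|\geq 2$.
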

\begin{proof}
Due to the invariance of the standard lattice~$\zn$ under translation by integer vectors, we only
need to prove the statement for $c$-layers between the origin and the point~$(1,\dots,1)^t$. Let
$x$ be a feasible integer point on the $k$-th $c$-layer, where~$k\in\{0,\dots,n-1\}$. If all
coordinates of $x$ are equal, the point~$x$ is an element of the line spanned
by~$c=(\gamma,\dots,\gamma)^t$, and therefore the center of the $k$-th $c$-layer, thus a neighbor.
Otherwise, there exist at least two different coordinates~$x_i,x_{i'}$ of~$x$. We split the set of
indices into the two sets
\[\{i\,|\,x_i\equiv 0\text{ mod } 2\},\enspace\{i\,|\,x_i\equiv 1\text{ mod } 2\}\enspace.\]
Then one of the two sets -- denoted by~$\largeIndSet$ -- contains at
least~$\left\lfloor\frac{n+1}{2}\right\rfloor$ indices, while the other set~$\smallIndSet$ has at
most~$\left\lfloor\frac{n}{2}\right\rfloor$ elements. Therefore, we will use the
$\left\lfloor\frac{n}{2}\right\rfloor$-transitivity of~$G$ to control~$\smallIndSet$, and the
additional degree of transitivity to produce two different feasible integer points. We distinguish
the following two cases:
\begin{enumerate}[1)]
\item Suppose that~$x$ has two different coordinates~$x_i\neq x_{i'}$ of the same congruence class
modulo~$2$, that is, the corresponding indices~$i,i'$ are contained in the same set~$\largeIndSet$
or~$\smallIndSet$. Note that this condition is always satisfied if~$x$ has more than two different
coordinates. By the $(\left\lfloor\frac{n}{2}\right\rfloor+1)$-transitivity of~$G$, we then find a
permutation $g\in G$ such that
\[{i'}^g=i,\enspace \smallIndSet^g=\smallIndSet\enspace,\]
which implies~$\largeIndSet^g=\largeIndSet$. These assignments do not contradict each other since
we assumed that~$i'\in\smallIndSet$ if and only if~$i\in\smallIndSet$. Note that we do not require
non-emptiness of~$\smallIndSet$ in this case. By construction, all pairs of
coordinates~$(x_j,x_j^g)$ are in the same congruence class modulo~$2$ for all~$j=1,\dots,n$,
but~$x^g$ is different from~$x$ due to the assumption~$x_i\neq x_{i'}$. Hence, the convex
combination
\[y=\frac{1}{2}(x+x^g)\]
is an interior integer point of the segment determined by~$x$ and~$x^g$. Applying
Lemma~\ref{convex_comb_closer_and_feasible} to~$x$,~$x^g$ and~$y$, we conclude that~$y$ is a
feasible integer point that is closer to the center of the $k$-th $c$-layer than~$x$.
\item Otherwise, the point~$x$ has exactly two different coordinates~$x_i,x_{i'}$,
    where $i\in\largeIndSet$ and~$i'\in\smallIndSet$, that is, $x_k=x_i$ for all~$k\in\largeIndSet$,
    and $x_j=x_{i'}$ for all~$j\in\smallIndSet$. In this case, transitivity of~$G$ is sufficient to
    guarantee the existence of an appropriate permutation~$g\in G$ satisfying ${i'}^g=i$. Then~$x$
    and~$x^g$ are distinct elements of the orbit~$x^G$. Consider an interior point
    \[y=px+(1-p)x^g,\enspace p\in(0,1)\enspace,\]
    of the segment defined by~$x$ and~$x^g$. We want to determine a parameter~$p$ such that~$y$ is
    integral. Obviously, the coordinates~$y_l$ of~$y$ can only take the values
    \begin{alignat}{2}
    y_l&=px_{i\phantom{'}}+&(1-p)x_{i\phantom{'}}&=x_i\label{always_feasible_integer_in_smaller_ring_yl_one}\\
    y_l&=px_{i'}+&(1-p)x_{i'}&=x_{i'}\label{always_feasible_integer_in_smaller_ring_yl_two}\\
    y_l&=px_{i\phantom{'}}+&(1-p)x_{i'}&=p(x_i-x_{i'})+x_{i'}\label{always_feasible_integer_in_smaller_ring_yl_three}\\
    y_l&=px_{i'}+&(1-p)x_{i\phantom{'}}&=p(x_{i'}-x_i)+x_i\label{always_feasible_integer_in_smaller_ring_yl_four}\enspace.
    \end{alignat}
    Since~$x$ is integral, the coordinates of~$y$ of
    type~(\ref{always_feasible_integer_in_smaller_ring_yl_one}) and
    type~(\ref{always_feasible_integer_in_smaller_ring_yl_two}) are integral for any~$p\in(0,1)$. The
    coordinates of type~(\ref{always_feasible_integer_in_smaller_ring_yl_three})
    and~(\ref{always_feasible_integer_in_smaller_ring_yl_four}) are integral if~$p^{-1}$ divides the
    absolute value~$|x_i-x_{i'}|$. We may assume that~$x$ is not a neighbor. Since the
    sum of all coordinates is between~$0$ and~$n-1$, compare Remark~\ref{sum_of_coord_equal_to_k}, we
    therefore conclude that either the coordinates~$x_i$ and~$x_{i'}$ have different signs or one
    of the coordinates is equal to~$0$ and the other one is greater or equal than~$2$. In any case,
    we get~$|x_i-x_{i'}|\geq 2$. Hence, the choice
    \[p=\frac{1}{|x_i-x_{i'}|}\]
    is well-defined, and it guarantees that~$y$ is an interior integer point of the segment defined by~$x$ and~$x^g$.
    Again, we
    apply Lemma~\ref{convex_comb_closer_and_feasible} to~$x$,~$x^g$ and~$y$ in order to conclude
    that~$y$ is a
    feasible integer point that is closer to the center of the $k$-th $c$-layer than~$x$.
\end{enumerate}
\end{proof}
Iterated application of Theorem~\ref{always_feasible_integer_in_smaller_ring} demonstrates that the
set of neighbors is approachable over a sequence of feasible integer points starting from any
integer point within the feasible region. Therefore, we deduce the following statement.
\begin{cor}\label{feasibility_of_c_layer_implies_feasibility_of_set_of neighbors}
Let~$G\leq \Sn{n}$ be a symmetry group of an LP acting
$(\left\lfloor\frac{n}{2}\right\rfloor+1)$-transitively on the standard basis. Then the $k$-th
$c$-layer is feasible if and only if the set of neighbors~$\neigh_k$ is feasible.
\end{cor}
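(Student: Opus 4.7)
The plan is to prove the two directions separately; the ``if'' direction is trivial because every neighbor is by definition an integer point on the $k$-th $c$-layer, so any feasible element of $\neigh_k$ witnesses the feasibility of the layer. All of the work sits in the ``only if'' direction, which I would obtain by iterating Theorem~\ref{always_feasible_integer_in_smaller_ring}.

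Specifically, starting from an arbitrary feasible integer point $x^{(0)}$ in the $k$-th $c$-layer, I would produce a sequence $x^{(0)},x^{(1)},x^{(2)},\dots$ of feasible integer points in the same $c$-layer as follows: if $x^{(j)}$ is already a neighbor, stop; otherwise invoke Theorem~\ref{always_feasible_integer_in_smaller_ring} (using the hypothesis that $G$ is $(\lfloor n/2\rfloor+1)$-transitive) to pick $x^{(j+1)}$ in the same $c$-layer with $\|x^{(j+1)}-m_k\|^2<\|x^{(j)}-m_k\|^2$, where $m_k$ denotes the center.

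The main obstacle is arguing that this iteration terminates, since an arbitrary strictly decreasing sequence in $\mathbb{R}_{\geq 0}$ need not be finite. The way I would close this is by observing that the integer points on the $k$-th $c$-layer form an $(n-1)$-dimensional affine sublattice of $\zn$ (in the transitive case, the solution set of $\sum_i x_i=k$ in $\zn$), on which the function $y\mapsto \|y-m_k\|^2$ is proper: only finitely many integer points of the layer lie within any fixed Euclidean ball about $m_k$. Consequently the set of attained squared distances has no finite accumulation point, so every strictly decreasing sequence of such distances is finite. The terminal point of the sequence is therefore necessarily a neighbor, for otherwise Theorem~\ref{always_feasible_integer_in_smaller_ring} would allow a further strict descent; this feasible neighbor witnesses the feasibility of~$\neigh_k$.
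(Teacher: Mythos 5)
Your proposal is correct and follows the same route as the paper, which likewise deduces the corollary by iterated application of Theorem~\ref{always_feasible_integer_in_smaller_ring} (the ``if'' direction being immediate since neighbors lie on the layer). In fact you are slightly more careful than the paper: the termination argument via discreteness of the integer points on the layer --- only finitely many lie in any ball about the center, so the strictly decreasing distances cannot form an infinite sequence --- is left implicit in the paper's one-line justification.
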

Thus, we may reduce the problem of looking for a feasible integer point on the whole $c$-layer to
just testing the set of neighbors, hence at most~$\binom{n}{r}$ points on that layer, given
$(\left\lfloor\frac{n}{2}\right\rfloor+1)$-transitivity of the symmetry group. In this respect, the
set of neighbors is representative for its layer. As a last step, we therefore concentrate on how
to test the set of neighbors in an efficient way.
\subsection*{\textbf{Testing Neighbors}}
Once more, we want to exploit our knowledge about the symmetries of an IP problem. To this end, we
consider the set of neighbors in the transitive case as described in Remark~\ref{set_of_neighbors}.
Obviously, this set is invariant under the action of any group~$H\leq\Sn{n}$. Hence, we can study
the action of a symmetry group~$G\leq\Sn{n}$ of an IP not only on the IP itself but also on the set
of neighbors in each layer. In particular, we are interested in the decomposition of the set of
neighbors into orbits. Since~$G$ leaves invariant the feasible region of the IP, the infeasibility
of one neighbor implies the infeasibility of any neighbor in the same orbit, compare
Remark 6 of~\cite{herrboedi}. Hence, we only need to test one neighbor in each orbit. Of
course, we would like to have a small number of orbits, preferably one orbit only, which is the
more likely the more symmetries we have. Due to the simple structure of neighbors, we can relax
assumptions on transitivity to assumptions on homogeneity, which is weaker in principle.
The following theorem provides an upper bound on the degree of homogeneity of the action on the
standard basis that suffices to guarantee transitivity of~$G$ on the set of neighbors.
\begin{theo}\label{transitive_action_on_set_of_neighbors}
Let $G\leq \Sn{n}$ be a symmetry group of an LP acting $k$-homo\-ge\-neous\-ly on the standard
basis, where~$k\in\{1,\dots,\left\lfloor\frac{n}{2}\right\rfloor\}$. Then the group~$G$ acts
transitively on the set of neighbors~$\neigh_{r+n\mathbb{Z}}$ for any integer
\[r\in\{0,\dots,k\}\cup\{n-k,\dots,n-1\}\enspace.\]
\end{theo}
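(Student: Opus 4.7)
The plan is to reformulate the claim in terms of the $G$-action on subsets of $\{1,\ldots,n\}$ and then reduce both given ranges of $r$ to the $k$-homogeneity hypothesis via complementation and monotonicity of orbit counts.

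First, I would translate the statement into combinatorial language. By Remark~\ref{set_of_neighbors}, every element of $\neigh_{dn+r}$ equals $(d,\ldots,d)^t+\chi_S$, where $\chi_S\in\{0,1\}^n$ is the characteristic vector of an $r$-subset $S\subseteq\{1,\ldots,n\}$. Since $G\leq\Sn{n}$ fixes $(d,\ldots,d)^t$ and permutes coordinates, the $G$-action on $\neigh_{dn+r}$ is equivalent to the $G$-action on $r$-subsets of $\{1,\ldots,n\}$. Thus the claim reduces to: $G$ is $r$-homogeneous for every $r\in\{0,\ldots,k\}\cup\{n-k,\ldots,n-1\}$.

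The upper range is handled by complementation: the map $S\mapsto\{1,\ldots,n\}\setminus S$ is a $G$-equivariant bijection between $r$-subsets and $(n-r)$-subsets, and $r\geq n-k$ forces $n-r\leq k$. It therefore suffices to treat $r\in\{0,\ldots,k\}$, which I would do by downward induction starting from $r=k$ (the hypothesis). The inductive step reads: $j$-homogeneity implies $(j-1)$-homogeneity whenever $j\leq\lfloor n/2\rfloor$. To prove it, let $V_j$ be the rational vector space spanned by the $j$-subsets with its natural $\Sn{n}$-action; the number of $G$-orbits on $j$-subsets then equals $\dim V_j^G$. The $\Sn{n}$-equivariant \emph{up} map $U\colon V_{j-1}\to V_j$ sending $A$ to $\sum_{B\supset A,\,|B|=j}B$ is injective for $j\leq n/2$, so restricting to $G$-invariants yields $\dim V_{j-1}^G\leq\dim V_j^G$; hence $\dim V_j^G=1$ forces $\dim V_{j-1}^G=1$.

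The first two steps are routine bookkeeping. The real content, and the main obstacle, is the injectivity of the up-operator $U$ for $j\leq n/2$. This is the classical Livingstone--Wagner monotonicity theorem; a self-contained derivation proceeds by diagonalizing $U^{\ast}U$ in the Johnson scheme and observing that its eigenvalues $(j-i)(n-j+1-i)$ for $i=0,\ldots,j-1$ are all strictly positive in the required range.
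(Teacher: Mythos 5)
Your proposal is correct, and on the decisive step it is actually more careful than the paper's own argument. The outer reduction is identical: a neighbor in $\neigh_{r+nd}$ is determined by the $r$-subset of indices carrying the value $d+1$, so transitivity of $G$ on $\neigh_{r+nd}$ is equivalent to transitivity on $r$-subsets, and the range $r\in\{n-k,\dots,n-1\}$ is handled by passing to complements (the paper's ``switch the roles of $d$ and $d+1$''). Where you diverge is on $r<k$: the paper simply asserts that $k$-homogeneity supplies a $g\in G$ carrying the $r\leq k$ elements of $I_N^{d+1}$ onto those of $I_{N'}^{d+1}$, i.e.\ it silently uses that a $k$-homogeneous group with $k\leq\left\lfloor\frac{n}{2}\right\rfloor$ is $r$-homogeneous for every $r\leq k$. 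That implication is not immediate from the definition (extending two $r$-sets to $k$-sets and mapping one $k$-set to the other does not align the original $r$-sets); it is precisely the Livingstone--Wagner monotonicity theorem and genuinely needs $k\leq\left\lfloor\frac{n}{2}\right\rfloor$. So the paper's proof leaves an uncited appeal to a nontrivial theorem exactly where you place the real work, and your up-operator argument --- $U\colon V_{j-1}\to V_j$ is injective for $j\leq n/2$ because the eigenvalues $(j-i)(n-j+1-i)$ of $U^{\ast}U$ are strictly positive, whence $\dim V_{j-1}^G\leq\dim V_j^G$ and one orbit on $j$-subsets forces one orbit on $(j-1)$-subsets --- is a standard, complete proof of that step. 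What your route costs is length; what it buys is that the theorem, as stated with only $k$-homogeneity as hypothesis, is actually proved.
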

\begin{proof}
Let~$r\in\{0,\dots,k\}$ and~$d\in\mathbb{Z}$. By Remark~\ref{set_of_neighbors}, we know that any
neighbor~$N\in\neigh_{r+nd}$ has exactly~$r$ coordinates of value~$d+1$, and~$n-r$ coordinates of
value~$d$. We denote the corresponding sets of indices by~$I_N^{d+1}$ and~$I_N^d$. Due to the
$k$-homogeneity of~$G$ on the standard basis, there exists a permutation~$g\in G$ that maps
the~$r\leq k$ elements of~$I_N^{d+1}$ of a neighbor~$N\in\neigh_{r+nd}$ to the~$r$ coordinates
in~$I_{N'}^{d+1}$ of any other neighbor~$N'\in\neigh_{r+nd}$. Since~$g$ is a bijection, the
remaining set of indices~$I_N^d$ is automatically mapped to~$I_{N'}^d$. Hence, we find a
permutation~$g\in G$ with~$N^g=N'$ for any two neighbors~$N,N'\in\neigh_{r+nd}$, i.e., the
group~$G$ acts transitively on the set of neighbors~$\neigh_{r+nd}$. For~$r\in\{n-k,\dots,n-1\}$,
we switch the roles of~$d$ and~$d+1$ and apply the same reasoning.
\end{proof}
Thus, the assumption of a symmetry group acting
$\left\lfloor\frac{n}{2}\right\rfloor$-homogeneously on the standard basis is sufficient to assure
transitivity on the set of neighbors in every layer. Combining Remark 6 of~\cite{herrboedi}
and Theorem~\ref{transitive_action_on_set_of_neighbors}, we conclude that in case of a
$(\left\lfloor\frac{n}{2}\right\rfloor+1)$-transitive action, we only need to test one neighbor in
any $c$-layer in order to decide its IP-feasibility.
\begin{cor}\label{testing_one_neighbor_suffices}
Let $G\leq \Sn{n}$ be a symmetry group of an LP acting
$(\left\lfloor\frac{n}{2}\right\rfloor+1)$-transitively on the standard basis, and~$n\geq 2$. Then
the set of neighbors~$\neigh_k$ is feasible if and only if any neighbor~$N\in\neigh_k$ is feasible.
\end{cor}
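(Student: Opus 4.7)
The plan is to deduce the statement by combining the transitivity of $G$ on $\neigh_k$ (obtained from Theorem~\ref{transitive_action_on_set_of_neighbors}) with the $G$-invariance of the feasible region.

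First, I would observe that $(\lfloor n/2\rfloor+1)$-transitivity of $G$ on the standard basis in particular implies $\lfloor n/2\rfloor$-transitivity, and every $k$-transitive action is automatically $k$-homogeneous (a $k$-homogeneous action just requires transitivity on $k$-subsets rather than ordered $k$-tuples). Therefore the hypothesis of Theorem~\ref{transitive_action_on_set_of_neighbors} is satisfied with parameter $\lfloor n/2\rfloor$.

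Next, I would verify that the union
\[
\{0,\ldots,\lfloor n/2\rfloor\}\cup\{n-\lfloor n/2\rfloor,\ldots,n-1\}
\]
covers all residues $r\in\{0,\ldots,n-1\}$. This is a short case distinction on the parity of $n$: if $n=2m$ the two sets are $\{0,\ldots,m\}$ and $\{m,\ldots,2m-1\}$, while if $n=2m+1$ they are $\{0,\ldots,m\}$ and $\{m+1,\ldots,2m\}$; in both cases their union is $\{0,\ldots,n-1\}$. Hence Theorem~\ref{transitive_action_on_set_of_neighbors} yields that $G$ acts transitively on $\neigh_k$ for \emph{every} $k\in\mathbb{Z}$.

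For the nontrivial direction of the equivalence, assume $\neigh_k$ is feasible, i.e., there exists some feasible neighbor $N_0\in\neigh_k$. Given any other $N\in\neigh_k$, transitivity provides $g\in G$ with $N_0^g=N$; by Remark~6 of~\cite{herrboedi}, the feasible region is $G$-invariant, so $N=N_0^g$ is feasible as well. The converse direction is trivial from the definition of feasibility of a set. I do not anticipate any real obstacle: everything reduces to checking that the hypothesis is strong enough to invoke Theorem~\ref{transitive_action_on_set_of_neighbors} for all residues $r$, and the feasibility transfer is immediate from the invariance of $X$ under $G$.
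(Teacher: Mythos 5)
Your proposal is correct and follows exactly the route the paper intends: reduce $(\lfloor n/2\rfloor+1)$-transitivity to $\lfloor n/2\rfloor$-homogeneity, invoke Theorem~\ref{transitive_action_on_set_of_neighbors} (checking that the admissible residues cover all of $\{0,\dots,n-1\}$), and transfer feasibility along the orbit via the $G$-invariance of the feasible region. In fact your write-up is more explicit than the paper, which dispatches the corollary in one sentence by ``combining'' the theorem with Remark~6 of the predecessor paper.
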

Now we are ready to bring together all the results of this section in order to deduce an applicable
algorithm.

\subsection*{\textbf{A Linear Algorithm for the Alternating and the Symmetric Group}}
If the number of dimensions~$n$ is greater or equal than~$5$, the assumption of
$(n-2)$-transitivity implies $(\left\lfloor\frac{n}{2}\right\rfloor+1)$-transitivity. Hence, we can
apply our results to any IP corresponding to an LP whose full symmetry group is isomorphic
to~$\An{n}$ or $\Sn{n}$, where~$n\geq 5$. For these problems,
Corollary~\ref{testing_of_at_most_n_layers_suffices} describes which $c$-layers need to be tested
and shows that we can stop after at most~$n$ layers.
Corollary~\ref{feasibility_of_c_layer_implies_feasibility_of_set_of neighbors} allows for reducing
the problem of testing the feasibility of every single point on a layer to simply testing the set
of neighbors. Finally, Corollary~\ref{testing_one_neighbor_suffices} guarantees that we only need
to check the feasibility of one neighbor per layer. Therefore, the following algorithm works
correctly, and it is linear in the number of dimensions~$n$.
\begin{cor}
Let~$n\geq 5$, and~$(a,\dots,a)^t$ be a solution of an LP with a symmetry group isomorphic
to~$\An{n}$ or $\Sn{n}$. Then testing the feasibility of one neighbor on every $c$-layer beginning
with the $\lfloor na \rfloor$-th $c$-layer down to the $(n\lfloor a\rfloor)$-th $c$-layer either
leads to a solution of the corresponding IP or reveals its infeasibility. The algorithm stops after
at most~$n$ steps.
\end{cor}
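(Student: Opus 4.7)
The plan is to assemble the three previous corollaries after first verifying that the transitivity hypothesis they require is met. The alternating group $\An{n}$ is known to act $(n-2)$-transitively on $\{1,\dots,n\}$, and $\Sn{n}$ acts $n$-transitively, so in particular both act $(n-2)$-transitively. I would therefore first check that
\[n-2 \;\geq\; \left\lfloor\tfrac{n}{2}\right\rfloor+1 \qquad\text{for all } n\geq 5,\]
which is a one-line calculation (equality holds at $n=5$ and $n=6$, and the gap only widens for larger $n$). Hence any LP whose full symmetry group is $\An{n}$ or $\Sn{n}$ satisfies the $(\lfloor n/2\rfloor+1)$-transitivity hypothesis required by the earlier results.

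Next I would chain the three corollaries. By Corollary~\ref{testing_of_at_most_n_layers_suffices}, it suffices to sift through the $c$-layers from the $\lfloor na\rfloor$-th down to the $(n\lfloor a\rfloor)$-th; this is a window of at most $n$ layers, and either a feasible integer point is found on one of them or an infeasible center on some intermediate layer certifies infeasibility of the IP. By Corollary~\ref{feasibility_of_c_layer_implies_feasibility_of_set_of neighbors}, for each such layer the IP-feasibility question reduces from all integer points on the layer to the set of neighbors $\neigh_k$. Finally, by Corollary~\ref{testing_one_neighbor_suffices}, the transitivity of $G$ on $\neigh_k$ means that testing one single neighbor decides feasibility of the entire set.

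Combining these three reductions gives the desired algorithm: loop over the (at most $n$) layers in the specified range, and on each layer test the feasibility of one arbitrarily chosen neighbor; as soon as a feasible neighbor is found, return it as an IP solution, and if no layer in the range yields a feasible neighbor, declare the IP infeasible. The correctness is immediate from the corollaries, and the step count is bounded by $n$.

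No real obstacle is expected here, since the statement is an application of already-proved results; the only subtle point is the numerical verification that $(n-2)$-transitivity of $\An{n}$ is strong enough to imply the $(\lfloor n/2\rfloor+1)$-transitivity assumed in the preceding machinery, which fails precisely for $n\leq 4$ and motivates the hypothesis $n\geq 5$.
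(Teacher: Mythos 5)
Your proposal is correct and follows essentially the same route as the paper: verify that $(n-2)$-transitivity of $\An{n}$ (and a fortiori $\Sn{n}$) implies $(\lfloor n/2\rfloor+1)$-transitivity for $n\geq 5$, then chain Corollary~\ref{testing_of_at_most_n_layers_suffices}, Corollary~\ref{feasibility_of_c_layer_implies_feasibility_of_set_of neighbors}, and Corollary~\ref{testing_one_neighbor_suffices}. The numerical check and the assembly of the three reductions match the paper's argument exactly.
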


\section{Conclusion}
At the end of this chapter, we want to summarize the results and insights we gained. From an
application-oriented point of view, the most promising result seems to be the knowledge about the
existence and the configuration of $c$-layers, as it may contribute to a more systematic search for
integral solutions. If we consider also the decline in the utility value for descending $c$-layers,
the check for maximality becomes redundant, and the testing of the feasibility of integer points
comes to the fore.\\

Certainly, in practice, we cannot expect symmetry groups of integer programs that act
$(\left\lfloor\frac{n}{2}\right\rfloor+1)$-transitively on the standard basis, not even
transitively. Hence, the results of Section~\ref{subsection_transitive_actions} should be regarded
as an abstract approach to the question which role is played by symmetries in integer programs. On
the one hand, we proved that the complexity of integer programs with extremely large symmetry
groups like the alternating or the symmetric group is linear. On the other hand, we also notice
that the number of orbits of neighbors, thus the number of points to be tested, can get
exponentially large as soon as we consider integer problems with smaller symmetry groups. But in
any case, knowledge about symmetries helps us to reduce the number of points we need to check.
Therefore, symmetry in integer programs should not be demonized but seized in all its
potential.

\bibliographystyle{amsplain}      
\bibliography{SymIPArxiv}

\end{document}